\newdimen{\algindent}
\algnewcommand\LeftComment[2]{%
	\hspace{#1\algindent}$\triangleright$ \eqparbox{COMMENT}{#2} \hfill %
}
\newcommand*{\skipnumber}[2][0]{%
	{\renewcommand*{\alglinenumber}[1]{}\\ \hspace{#1\algindent}$\triangleright$ \eqparbox{COMMENT}{#2} \hfill}%
	\addtocounter{ALG@line}{-1}}
\algnewcommand{\IIf}[1]{\State\algorithmicif\ #1\ \algorithmicthen}
\algnewcommand{\EndIIf}{\unskip\ \algorithmicend\ \algorithmicif}
\renewcommand*{\@fnsymbol}[1]{\ensuremath{\ifcase#1\or \dagger\or *\or \ddagger\or
		\mathsection\or \mathparagraph\or \|\or **\or \dagger\dagger
		\or \ddagger\ddagger \else\@ctrerr\fi}}
\def\thefigure{\thesection.\@arabic\c@figure}
\def\fps@figure{h, t}
\def\thetable{\thesection.\@arabic\c@table}
\def\fps@table{h, t}
\renewcommand\expandafter\subsection\expandafter{%
		\expandafter\@fb@secFB\subsection
	}%
\newtheorem{theorem}{Theorem}
\newtheorem*{claim*}{Claim}
\newtheorem*{corollary*}{Corollary}
\newtheorem{lemma}[theorem]{Lemma}
\def\bea{\begin{eqnarray}}
\def\eea{\end{eqnarray}}
\def\ba{\begin{array}}
\def\ea{\end{array}}
\newcommand{\rem}[1]{}
\newcommand{\bGamma}{\boldsymbol{\Gamma}}
\newcommand{\Continue}{\textbf{continue}}
\newcommand{\argmin}[1]{\underset{#1}{\operatorname{arg}\,\operatorname{min}}\;}
\title{Box Suite Recommendation}
\author{Stuart Rogers\thanks{Email address: \texttt{srogers@umn.edu} or \texttt{stumarcus@gmail.com}}}
\affil{Institute for Mathematics and its Applications, College of Science and Engineering, University of Minnesota, 207 Church ST SE, 306 Lind Hall, Minneapolis, MN 55455, USA}
\date{\today}
\providecommand{\keywords}[1]{\textbf{\textit{Keywords:}} #1}
\begin{document}

\maketitle

\abstract{\noindent 
An algorithm for recommending a suite of boxes for shipping a retailer's online customer orders is presented. }  
\keywords{e-commerce, box suite, fitting MILP, $p$-median problem, POPSTAR}
%\tableofcontents

\section{Introduction \& Literature Review} \label{sec_intro}
By selecting a cost-optimal suite of boxes for shipping its online customer orders, an online retailer such as Amazon, Walmart, or Target can save a significant amount of money through reduced shipping and material (i.e., corrugate, dunnage, and tape) costs \cite{stroehmer2012repac,stevens2017,wilson2019,mcginty2019}. As a consequence of minimizing cost, the cost-optimal suite also reduces the box outer volume and quantity and weight of material shipped, thereby lowering the online retailer's environmental carbon footprint \cite{stroehmer2012repac,stevens2017,wilson2019,mcginty2019,lu2020user}. An algorithm is presented that recommends such a cost-optimal suite to an online retailer.

The algorithm presented here solves a $p$-median problem \cite{daskin2013networkChap6} for a cost matrix obtained by fitting a statistically significant subset of past historical customer shipments into a large set of candidate boxes. The method to solve the fitting problem presented here is based on mixed integer linear programming (MILP). Reference \cite{dowsland2007simulated} formulates a $p$-median problem to solve the box suite recommendation problem when each box is filled with as many copies of a single product as possible (however a particular box may be used for several different products); \cite{dowsland2007simulated} also assumes that each product is packed into the box in identical layers with the height dimension placed vertically, which makes the fitting problem trivial. Reference \cite{brinker2016optimization} also formulates a $p$-median problem to solve the box suite recommendation problem; however, \cite{brinker2016optimization} uses a heuristic approach to solve the fitting problem. Reference \cite{brinker2016optimization} solves the $p$-median problem via MILP for small problems and via Lagrangian relaxation for large problems.  Unlike previous works, the algorithm presented here modifies the $p$-median problem to lock certain candidate boxes in the suite and handles special fitting constraints, including requirements that certain items be height-oriented and/or bottom-resting when packed. Also, unlike previous works, the algorithm presented here suggests two freely available, robust solvers, POPSTAR \cite{POPSTAR_online} and dc2 \cite{dc2_online}, which are capable of efficiently solving the large $p$-median problems formulated by the algorithm. 

Reference \cite{stroehmer2012repac} solves the box suite recommendation problem by applying an evolutionary algorithm to a statistically significant subset of historical orders, but \cite{stroehmer2012repac} does not formulate the fitting problem and does not formulate a $p$-median problem. Instead of selecting from a large set of candidate boxes, the iterative evolutionary algorithm varies the dimensions of the boxes in the suite in each iteration. Of note, reference \cite{stroehmer2012repac} uses stratified random sampling \cite{levy2013sampling}, instead of simple random sampling, to obtain a very small sample size, just 2,700 sample orders selected from 8 million historical orders; this small sample size enables enormous reductions in computation time compared to the sample size obtained from simple random sampling.

Reference \cite{alonso2016determining} solves the box suite recommendation problem using distributions rather than real order data. Reference \cite{alonso2016determining} assumes that products must be height-oriented and bottom-resting when packed into a box so that the products only occupy a single layer and only a 2D fitting problem needs to be solved. Since products are packed in a single layer, the height of each box in the set of candidate boxes is fixed, so that only the candidate box lengths and widths vary. Reference \cite{alonso2016determining} also assumes that an order must often be split among several boxes during the packing process because the order cannot fit entirely in a single box; in this work, order splits are ignored because the retailer funding this research rarely has to split orders. Reference \cite{alonso2016determining} uses heuristics for the container loading problem and
the bin packing problem to solve the 2D fitting problem.

\section{Notation} \label{sec_notation}
$\mathbb{Z}$ denotes the set of integers, $\mathbb{Z}_2 \equiv \left\{0,1\right\}$ denotes the set of binary numbers, $\mathbb{N}$ denotes the set of natural numbers, which is the same as the set of positive integers, and $\mathbb{N}_0 \equiv \mathbb{N} \cup \left\{0\right\} $ denotes the set of nonnegative integers.  If $m,n \in \mathbb{N}_0$, $\left\{m:n \right\} \equiv \left\{i \in \mathbb{N}_0 \colon m \le i \le n \right\}$ denotes the set of nonnegative integers greater than or equal to $m$ and less than or equal to $n$.  If $N \in \mathbb{N}$, $\llbracket N \rrbracket \equiv \{1,2,\dots,N \} = \left\{ 1:N \right\}$ denotes the set of natural numbers from $1$ to $N$. $\mathbb{R}$ denotes the set of real numbers, $\mathbb{R}_{>0}$ denotes the set of positive real numbers, $\mathbb{R}_{\ge 0} \equiv \mathbb{R}_{>0} \cup \left\{0\right\} $ denotes the set of nonnegative real numbers, and $\mathbb{R}_{\left[0,1\right]} \equiv \left\{w \in \mathbb{R} \colon 0 \le w \le 1\right\} = \mathbb{R} \cap \left[0,1\right]$ denotes the set of real numbers in the closed interval $\left[0,1\right]$. If $N \in \mathbb{N}$, $\left\{ a_n \right\}_{n=1}^N \subset \mathbb{R}$, and $w \in \mathbb{R}$, $\textproc{SearchSortedFirst}\left( \left\{ a_n \right\}_{n=1}^N , w \right)$ finds the index of the first value in $\left\{ a_n \right\}_{n=1}^N$ that is greater than or equal to $w$ and assumes that the values in $\left\{ a_n \right\}_{n=1}^N$ are sorted in nondecreasing order. If $w$ is greater than all values in $\left\{ a_n \right\}_{n=1}^N$, then $N+1$ is returned. The complexity of \textproc{SearchSortedFirst} is $O\left(\log N \right)$ if binary search is used and is $O(1)$ if the algorithm in \cite{cannizzo2018fast} is used. If $a_1,a_2, a_3 \in \mathbb{R}$,
$\textproc{sort} \left( a_1, a_2, a_3 \right)$ returns a permutation $\left(b_1,b_2,b_3 \right)$ of $\left( a_1, a_2, a_3 \right)$ such that $b_1 \ge b_2 \ge b_3$. Having only 3 inputs, the complexity of \textproc{sort} is $O(1)$. Given an array of integers $W \subset \mathbb{N}$ and an integer $i \in \mathbb{N}$, $\textproc{push}\left(W,i\right)$ appends $i$ to the end of the array $W$. The complexity of \textproc{push} is $O(1)$ if an appropriately implemented data structure is utilized to store the array of integers.

\section{Algorithm} \label{sec_algorithm}

\begin{algorithm}
	\caption{Box Suite Recommendation Part I}  \label{alg_bsr}
	\textbf{Input:} Box suite size $p$. $I$ shipments. Each shipment $i \in \llbracket I \rrbracket$ consists of $N_i$ 3D rectangular cartons with outer lengths, widths, and heights $\left\{ \left(p_{in},q_{in},r_{in} \right) \right\}_{n=1}^{N_i}$ and $M_i$ foldable items with outer lengths, widths, and heights $\left\{ \left(s_{im},t_{im},u_{im} \right) \right\}_{m=1}^{M_i}$. $J$ candidate boxes, where $J > p$, sorted by nondecreasing inner volume with inner lengths, widths, and heights $\left\{ \left(x_j,y_j,z_j \right) \right\}_{j=1}^J$ and inner volumes $\left\{ V_j = x_j y_j z_j \right\}_{j=1}^J$. For $j \in \llbracket J-1 \rrbracket$, $ V_j \le V_{j+1}$. A subset $T \subset \llbracket J \rrbracket$  of the candidate boxes, where $\left| T \right| = k \in \left\{0,1,2,\ldots,p-1 \right\}$, must be in the box suite. \\
	\textbf{Output:} A subset $S^* \subset \llbracket J \rrbracket$ of the candidate boxes such that $S^*$ ships all the packable shipments with minimum cost, subject to the constraints $\left| S^* \right| = p$ and $T \subset  S^*$. If such a subset does not exist, then $\O$ is returned. 
	\begin{algorithmic}[1]
	\For {$j=1$ to $J$} \Comment{Iterate over candidate boxes.}
		\State $\left(\tilde x_{j},\tilde y_{j},\tilde z_{j} \right) \gets \textproc{Sort} \left(x_{j},y_{j},z_{j} \right) $ \Comment{Sort box inner dimensions in nonincreasing order.}
		\EndFor
		\skipnumber[0]{Determine into which candidate boxes each candidate box nests.}
	    %\\ \LeftComment{0}{Determine into which candidate boxes each candidate box nests.}
		\For {$j=1$ to $J$} \Comment{Iterate over candidate boxes.}
		\State $\Theta_j \gets \left\{j \right\}$ \Comment{$\Theta_j$ stores the set of boxes into which box $j$ nests.}
		\For {$k \gets j+1$ to $J$} \Comment{Iterate over equal or larger volume candidate boxes.}
		\If{$\left(\tilde x_j \le \tilde x_k \right) \wedge \left(\tilde y_j \le \tilde y_k \right) \wedge \left(\tilde z_j \le \tilde z_k \right) $} \Comment{If box $j$ nests inside box $k$.}
		\State $\textproc{push}\left(\Theta_j,k\right)$
		\EndIf
		\EndFor
		\EndFor
		\skipnumber[0]{Construct the $I \times J$ fitting matrix $B$ and determine the packable shipments.}
		%\\ \LeftComment{0}{Construct the $I \times J$ fitting matrix $B$ and determine the packable shipments.}
		\State $\hat I \gets 0$ \Comment{Initialize the number of packable shipments to $0$.}
		\State $W \gets \O$ \Comment{$W$ stores the indices of shipments that are packable.}
		\State $B \gets \mathbf{0}_{I \times J}$ \Comment{Initialize each entry of the fitting matrix to zero.}
		\For {$i=1$ to $I$} \Comment{Iterate over shipments.}
		\State $v_i \gets \sum_{n=1}^{N_i} p_{in} q_{in} r_{in}+\sum_{m=1}^{M_i} s_{im} t_{im} u_{im} $ \Comment{Liquid volume of shipment $i$.}
		\State $j_0 \gets \textproc{SearchSortedFirst}\left( \left\{ V_j \right\}_{j=1}^J , v_i  \right)$ \Comment{Find the smallest box whose inner volume $\ge v_i$.}
		\If{$j_0 = J+1$} \Continue \Comment{This shipment does fit into any box, so skip to the next shipment.}
		\EndIf
		\If{$N_i=0$} \Comment{Only foldable items in the shipment.}
		\State $ B_{i \left\{j_0 : J \right\}} \gets \mathbf{1}_{1 \times \left( J - j_0 + 1 \right)} $
		\State \Continue \Comment{Skip to the next shipment.}
		\EndIf
		\For {$n=1$ to $N_i$} \Comment{Iterate over cartons in shipment $i$.}
		\State $\left(\tilde p_{in},\tilde q_{in},\tilde r_{in} \right) \gets \textproc{Sort} \left(p_{in},q_{in},r_{in} \right) $ \Comment{Sort carton outer dimensions in nonincreasing order.}
		\EndFor
		\State $\mathring p_i \gets \sum_{n=1}^{N_i} \tilde p_{in} \quad \quad \mathring q_i \gets \sum_{n=1}^{N_i} \tilde q_{in} \quad \quad \mathring r_i \gets \sum_{n=1}^{N_i} \tilde r_{in} $
		\State $\hat p_i \gets \max_{1 \le n \le N_i} \tilde p_{in} \quad \quad \hat q_i \gets \max_{1 \le n \le N_i} \tilde q_{in} \quad \quad \hat r_i \gets \max_{1 \le n \le N_i} \tilde r_{in} $
		\For {$j=j_0$ to $J$} \Comment{Iterate over candidate boxes whose inner volume $\ge v_i$.}
		\If{$B_{ij} = 1$} \Continue \Comment{Skip to the next candidate box.}
		\ElsIf{$ \left(\hat p_i \le \tilde x_j \right) \wedge \left( \hat q_i \le \tilde y_j \right) \wedge \left( \hat r_i \le \tilde z_j \right) $} \Comment{Each carton in shipment $i$ must fit in box $j$.}
 \If{$N_i=1 $}  
  $ B_{i \Theta_j} \gets \mathbf{1}_{1 \times \left| \Theta_j \right|} $ \Comment{Only 1 carton in the shipment.}
   \ElsIf{$\left(\mathring p_i \le \tilde x_j \right) \vee \left(\mathring q_i \le \tilde y_j \right) \vee \left(\mathring r_i \le \tilde z_j \right) $} $ B_{i \Theta_j} \gets \mathbf{1}_{1 \times \left| \Theta_j \right|} $ \Comment{Try stacking.}
   \skipnumber[3]{Solve the NP-complete fitting MILP, e.g., with CPLEX or Gurobi.}
   %\\ \LeftComment{3}{Solve the NP-complete fitting MILP, e.g., with CPLEX or Gurobi.}
  \ElsIf{$\textproc{FittingMILP}\left( \left\{ \left(p_{in},q_{in},r_{in} \right) \right\}_{n=1}^{N_i}  , \left(x_j,y_j,z_j \right) \right)$} $ B_{i \Theta_j} \gets \mathbf{1}_{1 \times \left| \Theta_j \right|} $ %\Comment{Solve the fitting \hphantom{blah blah blah blah blah blah blah blah blah blah blah blah blah blah blah blah blah blah } MILP.} 
  \EndIf
  \EndIf
		\EndFor
		\If{$\bigvee_{j=1}^J B_{ij} = 1$} \Comment{Ignore shipments that cannot be packed into any candidate box.}
		\State $\hat I = \hat I+1$ \Comment{$\hat I$ stores the number of packable shipments encountered so far.}
		\State $J_{\hat I} \gets \left\{ j \in \llbracket J \rrbracket \colon B_{ij}=1  \right\} $ \Comment{Find the subset of candidate boxes into which shipment $i$ fits.}
		\State $\textproc{push}\left(W,i\right)$ \Comment{Add shipment $i$ to the set of packable shipments.}
		\EndIf
		\EndFor
	\algstore{myalg}
	\end{algorithmic}
\end{algorithm}

\begin{algorithm}  
\ContinuedFloat
\caption{Box Suite Recommendation Part II}                   
\begin{algorithmic} [1]                   % enter the algorithmic environment
\algrestore{myalg}
\skipnumber[0]{Construct the $\left(\hat I +k\right) \times J$ cost matrix $C$.}
%\\ \LeftComment{0}{Construct the $\left(\hat I +k\right) \times J$ cost matrix $C$.}
\State $C \gets \boldsymbol{0}_{\left(\hat I +k\right) \times J}$ \Comment{Preallocate memory for a $\hat I +k$ by $J$ cost matrix.}
		\For {$i =1$ to  $\hat I$} \Comment{Iterate over packable shipments.}
		\For {$j \in J_i$} \Comment{Iterate over the subset of candidate boxes into which packable shipment $i$ fits.}
		\State Compute the cost $C_{ij}$ of shipping packable shipment $i$ (shipment $W_i$) in candidate box $j$. 
		\EndFor
		\EndFor
		\State $\Gamma \gets \left[ \sum_{i=1}^{\hat I} \max_{j \in J_i } C_{ij} \right] +1$ \Comment{Set the penalty cost $\Gamma$ to a sufficiently large positive real number.}
		\For {$i =1$ to  $\hat I$} \Comment{Iterate over packable shipments.}
		\State $C_{i \left[J \setminus J_i \right] } \gets  \bGamma_{1 \times \left( J-\left| J_i \right| \right)} $  \Comment{Packable shipment $i$ ships with penalty cost $\Gamma$ in boxes into which it does not fit.}
		\EndFor
		\For {$i=\hat I +1$ to $\hat I+k$} \Comment{Iterate over fake shipments.}
		\State $C_{i T_{i-\hat I}} \gets 0$ \Comment{Fake shipment $i$ ships for free in box $T_{i-\hat I}$.}
		\State $C_{i \left[ J \setminus \left\{T_{i-\hat I} \right\} \right]} \gets \bGamma_{1 \times \left( J-1 \right)}$ \Comment{Fake shipment $i$ ships with penalty cost $\Gamma$ in all other boxes.}
		\EndFor
		\State $S^* \gets \argmin{\substack{S \subset \llbracket J \rrbracket,\, \left| S \right| = p } } \sum_{i=1}^{\hat I+k} \min_{j \in S } C_{ij}$ \Comment{Solve the NP-hard $p$-median problem, e.g., with POPSTAR or dc2.}
		\State $\Phi \gets \sum_{i=1}^{\hat I+k} \min_{j \in S^* } C_{ij}$ \Comment{Compute the cost of using $S^*$ to ship the packable shipments.}
		\If{$\Phi \ge \Gamma$}
		\State \Return $\O$ \Comment{There is no feasible solution, so return the empty set.}
		\Else
		\State \Return $S^*$ \Comment{Return a cost-optimal suite.}
		\EndIf
\end{algorithmic}
\end{algorithm}

\paragraph{Problem Description and Inputs} An online retailer needs a suite of $p \in \mathbb{N}$ boxes to ship its online customer orders. Moreover, $k \in \{0\} \cup \llbracket p-1 \rrbracket = \left\{0:p-1 \right\}$ boxes in the suite may be prescribed (or locked). For example, the online retailer may need some special boxes for shipping liquid-containing items such as liquid detergent. To save money, the online retailer wants such a suite that minimizes total shipping (shipping plus material) cost. One approach to designing this suite is to select $ I \in \mathbb{N}$ historical customer shipments and $J \in \mathbb{N}$ candidate boxes, where $J>p$. Each historical customer shipment is assigned a unique index $i \in \llbracket I \rrbracket$, and each candidate box is assigned a unique index $j \in \llbracket J \rrbracket$. The set of historical customer shipments should be a small, but statistically significant, randomly sampled subset of the online retailer's past (e.g., within the previous year) online customer shipments. Each historical customer shipment $i \in \llbracket I \rrbracket$ consists of $N_i \in \mathbb{N}_0$ 3D rectangular cartons with positive outer lengths, widths, and heights $\left\{ \left(p_{in},q_{in},r_{in} \right) \right\}_{n=1}^{N_i}$, where $\left(p_{in},q_{in},r_{in} \right) \in \mathbb{R}_{> 0}^3$ for $i \in \llbracket I \rrbracket$ and $n \in \llbracket N_i \rrbracket$, and $M_i \in \mathbb{N}_0$ foldable items with positive outer lengths, widths, and heights $\left\{ \left(s_{im},t_{im},u_{im} \right) \right\}_{m=1}^{M_i}$, where $\left(s_{im},t_{im},u_{im} \right) \in \mathbb{R}_{> 0}^3$ for $i \in \llbracket I \rrbracket$ and $m \in \llbracket M_i \rrbracket$. For the purposes of packing, foldable items are assumed to be liquid so that they may be deformed to fit into arbitrarily-shaped empty spaces inside a box. The set of $J$ candidate boxes should finely discretize the space of all possible boxes and must include the $k$ locked boxes that must be in the suite. The indices of those $k$ locked boxes are prescribed in the subset $T \subset \llbracket J \rrbracket$, where $\left | T \right | = k$. Each candidate box $j \in \llbracket J \rrbracket$ is characterized by a positive inner length, width, and height $\left(x_j,y_j,z_j \right) \in \mathbb{R}_{> 0}^3$. Therefore, the inner volume of candidate box $j \in \llbracket J \rrbracket$ is $V_j = x_j y_j z_j \in \mathbb{R}_{> 0}$. It is assumed that the candidate boxes are sorted by nondecreasing inner volume, which may be realized in $O\left(J \log J \right)$, so that $V_j \le V_{j+1}$ for $j \in \llbracket J-1 \rrbracket$. The optimal suite is obtained by selecting a subset $S \subset \llbracket J \rrbracket$ of the $J$ boxes, such that $\left | S \right | = p$ and $T \subset S$, that ships the $\hat I$ packable shipments, where $\hat I \le I$ (since not all of the $I$ shipments necessarily fit in the $J$ candidate boxes), with minimum cost. Algorithm~\eqref{alg_bsr} gives a method for solving this optimization problem. The next few paragraphs describe the key parts of Algorithm~\eqref{alg_bsr}.

\paragraph{Fitting Matrix} The algorithm begins by sorting each box's dimensions in nonincreasing order. Then, the algorithm determines whether each shipment fits into each candidate box, recording the result in the binary fitting matrix $B \in \mathbb{Z}_2^{I \times J}$. It is simple to solve the fitting problem when there are 0 or 1 cartons in the shipment. There are brute force algorithms for solving the fitting problem for 2 and 3 cartons in the shipment, but these are omitted from the algorithm for conciseness. For 2 or more cartons in the shipment, the algorithm first attempts to stack the cartons along each of the box's three orthogonal axes, to see if they fit. If simple stacking does not work, then the algorithm uses the fitting MILP, a feasibility MILP described in Section~\ref{sec_fitting_MILP}, to solve the fitting problem. Since the fitting MILP must be solved via a third-party MILP solver, which may require a commercial license and which is computationally expensive, substantial run time improvements can be realized by utilizing the brute force fitting algorithms for 2 and 3 cartons. The source code accompanying \cite{martello2007algorithm} provides implementations of the brute force algorithms for 2 and 3 cartons; however, the algorithms presented in that code must be modified to permit rotations. 

Also note that in order for a shipment to fit into a candidate box, the box's inner volume must be greater than or equal to the shipment's liquid volume and each individual carton in the shipment must fit inside the box. For efficiency, the algorithm checks that these necessary conditions are satisfied first before attempting to solve the stacking problem or fitting MILP when there are 2 or more cartons in the shipment. Moreover, if brute force fitting algorithms for 2 and 3 cartons are available, these may be used to check that all pairs and all triples of cartons fit inside the box before attempting to solve the stacking problem or fitting MILP.

\paragraph{Cost Matrix} Next, the algorithm removes shipments that could not be packed into any candidate box, leaving $\hat I \le I$ packable shipments. For each packable shipment $i \in \llbracket \hat I \rrbracket$, $J_i \subset J$ denotes the set of candidate boxes into which packable shipment $i$ fits.  Then, the algorithm constructs the nonnegative cost matrix $C \in \mathbb{R}_{\ge 0}^{ \left(\hat I +k\right) \times J}$ which records the cost of shipping each packable shipment into each candidate box. If packable shipment $i \in \llbracket \hat I \rrbracket$ fits in candidate box $j \in \llbracket J \rrbracket$ (i.e., if $j \in J_i$), the cost $C_{ij} \in \mathbb{R}_{\ge 0}$ to ship packable shipment $i \in \llbracket \hat I \rrbracket$ in candidate box $j \in \llbracket J \rrbracket$ is computed. Note that in order to compute the cost for packable shipment $i \in \llbracket \hat I \rrbracket$, the data for shipment $W_i \in \llbracket I \rrbracket$ is needed; that is, $W_i$ is the shipment index of packable shipment $i$. The data for shipment $W_i \in \llbracket I \rrbracket$ may include the outer dimensions and weights of each item, the shipping carrier (e.g., USPS, FedEx, or UPS), the shipping service (e.g., 1 day, 2 day, or 3 day), and the shipping zone, which is determined by the locations of the shipping store or warehouse and the customer. The cost is computed via a detailed formula, which is omitted here, that depends on the shipping cost charged by the carrier and service combination, the cost of the corrugate used to construct the box, the cost to transport the box blank from the box manufacturer to the ratailer's stores or warehouses, the cost of the dunnage used to fill the empty space between the packed shipment and the box's interior, and the cost of the tape used to seal the top and bottom flaps of the box shut. More simply, if instead of minimizing cost, the online retailer wishes to minimize box outer (or inner) volume shipped or material weight shipped, then $C_{ij}$ is the outer (or inner) volume of box $j$ or the weight of the material (corrugate, dunnage, and tape) used to ship packable shipment $i$ in box $j$, respectively. 

Let $\Gamma \in \mathbb{R}_{> 0}$ be a sufficiently large positive real number such as $\left[ \sum_{i=1}^{\hat I} \max_{j \in J_i } C_{ij} \right] +1 \in \mathbb{R}_{> 0}$. The constant $\Gamma$ serves as a penalty cost to impose constraints on the solution suite $S$. In order to ensure that the solution suite $S$ ships all the packable shipments, $C_{ij}$ is set to $\Gamma$ if  packable shipment $i$ does not fit in candidate box $j$, i.e., if $j \in \llbracket J \rrbracket  \setminus J_i$. In order to force the boxes in $T$ into the solution suite $S$, $k$ fake shipments are appended to the set of packable shipments and $k$ rows are added to the bottom of $C$, where each row, indexed by $i \in \left\{\hat I+1,\hat I+2,\ldots,\hat I+k \right\}$, stores the shipping costs for a fake shipment representing box $T_{i - \hat I}$. The cost of shipping the fake shipment indexed by $i \in \left\{\hat I+1,\hat I+2,\ldots,\hat I+k \right\}$ in box $T_{i - \hat I}$ is 0 and in all other boxes is $\Gamma$. That is, $C_{i T_{i - \hat I}} = 0$ and $C_{i j} = \Gamma$ for $j \in \llbracket J \rrbracket \setminus \left\{ T_{i - \hat I} \right\}$. 

\paragraph{Formulate the $p$-Median Problem}
The optimization problem that must be solved is
\begin{equation} \label{opt_orig}
\argmin{ \substack{T \subset  S \subset \llbracket J \rrbracket,\, \left| S \right| = p, \\ J_i \cap S \ne \O \, \forall i \in \llbracket \hat I \rrbracket } } \sum_{i=1}^{\hat I} \min_{j \in J_i \cap S } C_{ij}.
\end{equation}
Instead, the following optimization problem is solved:
\begin{equation} \label{opt_p_median}
\argmin{\substack{S \subset \llbracket J \rrbracket,\, \left| S \right| = p } } \sum_{i=1}^{\hat I+k} \min_{j \in S  } C_{ij}. 
\end{equation}
The optimization problem \eqref{opt_p_median} is an instance of the $p$-median problem \cite{hakimi1964optimum,hakimi1965optimum,daskin2013networkChap6}, or more precisely the $p$-facility location problem \cite{cornuejols1983uncapicitated}. In the literature, the $p$-median problem is sometimes also called the $k$-median problem \cite{jain1999primal,jain2001approximation,arya2004local}. Given $n \in \mathbb{N}$ customers, $m \in \mathbb{N}$ candidate facilities, $p \in \llbracket m \rrbracket$ candidate facilities to open, and nonnegative costs $d  \in \mathbb{R}_{\ge 0}^{n \times m}$, where $d_{ij} $ is the cost of serving customer $i \in \llbracket n \rrbracket$ with candidate facility $j \in \llbracket m \rrbracket$, the $p$-median problem is to find (or open) a subset $S \subset \llbracket m \rrbracket$ comprised of $p$ candidate facilities that serves all the customers with minimum cost, that is, that minimizes the sum of the costs of serving each customer with its minimum cost open facility. Mathematically, the $p$-median problem is
\begin{equation} \label{opt_p_median_gen}
	\argmin{\substack{S \subset \llbracket m \rrbracket,\, \left| S \right| = p } } \sum_{i=1}^{n} \min_{j \in S  } d_{ij}. 
\end{equation}
To equate \eqref{opt_p_median} to \eqref{opt_p_median_gen}, the shipments are the customers, the boxes are the facilities, $\hat I+k = n$, $J =m$, and $C = d$. 

The equivalence of \eqref{opt_orig} and \eqref{opt_p_median} is established as follows.
\begin{lemma} \label{le1} \eqref{opt_orig} does not have a solution if and only if
\begin{equation}
\min_{\substack{S \subset \llbracket J \rrbracket,\, \left| S \right| = p } } \sum_{i=1}^{\hat I+k} \min_{j \in S  } C_{ij} \ge \Gamma.
\end{equation}	
\end{lemma}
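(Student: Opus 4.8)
The plan is to prove the sharper statement that a subset $S \subset \llbracket J \rrbracket$ with $\left| S \right| = p$ is feasible for \eqref{opt_orig} if and only if its $p$-median objective value $\Phi(S) \equiv \sum_{i=1}^{\hat I + k} \min_{j \in S} C_{ij}$ satisfies $\Phi(S) < \Gamma$. The lemma then follows by negating over all admissible $S$: \eqref{opt_orig} has no solution precisely when no feasible $S$ exists, i.e. when $\Phi(S) \ge \Gamma$ for every $S$ of size $p$, i.e. when $\min_{\left| S \right| = p} \Phi(S) \ge \Gamma$.

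First I would split $\Phi(S)$ into the contribution of the $\hat I$ genuine packable shipments and that of the $k$ fake shipments. For the fake shipment indexed by $\hat I + \ell$, the corresponding row of $C$ equals $0$ in column $T_\ell$ and $\Gamma$ in every other column, so $\min_{j \in S} C_{\hat I + \ell,\, j}$ is $0$ when $T_\ell \in S$ and $\Gamma$ when $T_\ell \notin S$. Summing, the fake block contributes exactly $\Gamma \left| T \setminus S \right|$, which is $0$ when $T \subset S$ and at least $\Gamma$ otherwise. This already disposes of every $S$ with $T \not\subset S$: such an $S$ violates the constraint $T \subset S$ of \eqref{opt_orig} and is therefore infeasible, while at the same time $\Phi(S) \ge \Gamma$, so the claimed equivalence holds in this case.

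It remains to treat $S$ with $T \subset S$, where the fake block vanishes and $\Phi(S) = \sum_{i=1}^{\hat I} \min_{j \in S} C_{ij}$. Here I would invoke the construction of $C$ on the packable rows: for $j \in J_i$ the entry $C_{ij}$ is the genuine shipping cost, and for $j \notin J_i$ it equals the penalty $\Gamma$. Hence $\min_{j \in S} C_{ij} = \min_{j \in J_i \cap S} C_{ij}$ when $J_i \cap S \ne \O$, and $\min_{j \in S} C_{ij} = \Gamma$ when $J_i \cap S = \O$. The decisive quantitative input is the choice $\Gamma = \left[ \sum_{i=1}^{\hat I} \max_{j \in J_i} C_{ij} \right] + 1$: since all genuine costs are nonnegative, each served shipment contributes at most $\max_{j \in J_i} C_{ij}$, so if every packable shipment is served then $\Phi(S) \le \sum_{i=1}^{\hat I} \max_{j \in J_i} C_{ij} = \Gamma - 1 < \Gamma$, whereas if even one packable shipment has $J_i \cap S = \O$ then that single term alone already forces $\Phi(S) \ge \Gamma$. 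Thus, for $T \subset S$, the feasibility requirement $J_i \cap S \ne \O$ for all $i \in \llbracket \hat I \rrbracket$ is equivalent to $\Phi(S) < \Gamma$, which completes the equivalence.

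The step I expect to require the most care is the strict inequality in the ``all served'' case: it is exactly the $+1$ (any strictly positive slack would serve) in the definition of $\Gamma$ that cleanly separates a fully feasible suite from one that drops a shipment, ruling out a boundary coincidence in which a feasible-but-expensive suite ties with the penalty. Once the equivalence between feasibility of \eqref{opt_orig} and the bound $\Phi(S) < \Gamma$ is established, the lemma is immediate by taking the minimum over all $S$ with $\left| S \right| = p$.
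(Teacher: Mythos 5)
Your proof is correct and uses essentially the same ingredients as the paper's: the split of the objective into the genuine and fake blocks, the bound $\Phi(S) \le \Gamma - 1 < \Gamma$ for any $S$ satisfying the constraints of \eqref{opt_orig} (exactly the chain in \eqref{eq_just}), and the observation that any $S$ violating those constraints picks up at least one penalty term equal to $\Gamma$. Packaging these two facts as a single pointwise equivalence (feasibility of $S$ iff $\Phi(S) < \Gamma$) before minimizing over $S$ is a slightly tidier organization than the paper's two separate implication arguments, but the substance is identical.
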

\begin{proof}
 If 
\begin{equation} \label{eq_ineq}
\min_{\substack{S \subset \llbracket J \rrbracket,\, \left| S \right| = p } } \sum_{i=1}^{\hat I+k} \min_{j \in S  } C_{ij} \ge \Gamma,
\end{equation}
then, by construction of $\Gamma$ and $C$,  $\forall S \subset \llbracket J \rrbracket$ such that $ \left| S \right| = p$, $T \not\subset S$ or $\exists i \in \llbracket \hat I \rrbracket \ni J_i \cap S = \O$. To see this explicity, suppose that $\exists S^* \subset \llbracket J \rrbracket$ such that  $ \left| S^* \right| = p$, $T \subset S^*$, and $\forall i \in \llbracket \hat I \rrbracket \; J_i \cap S^* \ne \O$. Then
\begin{equation} \label{eq_just}
\sum_{i=1}^{\hat I+k} \min_{j \in S^*  } C_{ij} = \sum_{i=1}^{\hat I} \min_{j \in S^*  } C_{ij} +\sum_{i=\hat I+1}^{\hat I+k} \min_{j \in S^*  } C_{ij} = \sum_{i=1}^{\hat I} \min_{j \in S^*  } C_{ij} = \sum_{i=1}^{\hat I} \min_{j \in J_i \cap S^*  } C_{ij} \le \sum_{i=1}^{\hat I} \max_{j \in J_i } C_{ij} < \Gamma,
\end{equation}
which contradicts \eqref{eq_ineq}. The second equality in \eqref{eq_just} holds because $T \subset S^*$ implies that $\min_{j \in S^*  } C_{ij} = C_{i T_{i - \hat I}} = 0 \quad \forall i \in \left\{\hat I+1,\hat I+2,\ldots,\hat I+k \right\}$. The third equality in \eqref{eq_just} holds because $S^* = \left( J_i \cap S^* \right) \cup \left( \left(\llbracket J \rrbracket \setminus J_i \right) \cap S^* \right)$, $C_{ij} < \Gamma \; \forall j \in J_i $, $C_{ij} = \Gamma \; \forall j \in \llbracket J \rrbracket \setminus J_i $,  and $J_i \cap S^* \ne \O \; \forall i \in \llbracket \hat I \rrbracket$. In this case, there does not exist a solution to \eqref{opt_orig}. 

Conversely, if there does not exist a solution to \eqref{opt_orig}, then $\forall S \subset \llbracket J \rrbracket$ such that $ \left| S \right| = p$, $T \not\subset S$ or $\exists i \in \llbracket \hat I \rrbracket \ni J_i \cap S = \O$. Therefore, by construction of $C$, $\forall S \subset \llbracket J \rrbracket$ such that $ \left| S \right| = p$, $\exists i \in \llbracket \hat I+k \rrbracket \ni \min_{j \in S  } C_{ij} = \Gamma$. Hence,  $\forall S \subset \llbracket J \rrbracket$ such that $ \left| S \right| = p$,
\begin{equation}
\sum_{i=1}^{\hat I+k} \min_{j \in S  } C_{ij} \ge \Gamma,
\end{equation}
so that
\begin{equation}
\min_{\substack{S \subset \llbracket J \rrbracket,\, \left| S \right| = p } } \sum_{i=1}^{\hat I+k} \min_{j \in S  } C_{ij} \ge \Gamma.
\end{equation}
\end{proof}
\noindent The contrapositive of Lemma~\eqref{le1} gives the following corollary.
\begin{corollary*} \eqref{opt_orig} has a solution if and only if
	\begin{equation}
	\min_{\substack{S \subset \llbracket J \rrbracket,\, \left| S \right| = p } } \sum_{i=1}^{\hat I+k} \min_{j \in S  } C_{ij} < \Gamma.
	\end{equation}
\end{corollary*}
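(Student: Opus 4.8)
The plan is to obtain the Corollary directly as the logical negation of both sides of the biconditional in Lemma~\eqref{le1}, exactly as the preceding sentence (``The contrapositive of Lemma~\eqref{le1} gives the following corollary'') suggests. First I would introduce abbreviations to make the propositional structure transparent: let $P$ denote the statement ``\eqref{opt_orig} has a solution'' and let $Q$ denote the statement $\min_{\substack{S \subset \llbracket J \rrbracket,\, |S| = p}} \sum_{i=1}^{\hat I+k} \min_{j \in S} C_{ij} \ge \Gamma$. With this notation, Lemma~\eqref{le1} asserts precisely $\neg P \iff Q$. Since negating both sides of a biconditional preserves it, this is logically equivalent to $P \iff \neg Q$, which is the content of the Corollary once $\neg Q$ is identified with the strict inequality.

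The only step requiring a small verification is that $\neg Q$ is \emph{exactly} the inequality $\min_{\substack{S \subset \llbracket J \rrbracket,\, |S| = p}} \sum_{i=1}^{\hat I+k} \min_{j \in S} C_{ij} < \Gamma$ that appears in the Corollary. To justify this I would observe that the outer minimization ranges over the finite collection of subsets $S \subset \llbracket J \rrbracket$ with $|S| = p$, of which there are exactly $\binom{J}{p}$ (finite and nonempty since $J > p$); hence the minimum is attained and the left-hand side is a well-defined real number. By trichotomy of the reals this number is either $\ge \Gamma$ or $< \Gamma$, and these two cases are mutually exclusive and exhaustive, so $\neg Q$ is precisely the condition $< \Gamma$. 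Substituting this into $P \iff \neg Q$ reproduces the Corollary verbatim.

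I do not expect a genuine obstacle, since the argument is purely propositional once Lemma~\eqref{le1} is granted; all of the substantive work — the construction of the penalty $\Gamma$ and the cost matrix $C$, and the estimate $\sum_{i=1}^{\hat I} \max_{j \in J_i} C_{ij} < \Gamma$ — was already discharged in the proof of the Lemma. The one point I would be careful to state rather than leave implicit is the finiteness, and hence attainment, of the minimum over $S$, so that the passage from ``not $\ge \Gamma$'' to ``$< \Gamma$'' rests on trichotomy rather than on an unstated assumption that the infimum is a value of the objective.
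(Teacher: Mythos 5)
Your proposal is correct and follows exactly the route the paper takes: the paper proves the corollary in one line by taking the contrapositive (negation of both sides) of Lemma~\eqref{le1}, which is precisely your argument. Your added remark about finiteness of the collection of subsets $S$ (so the minimum is attained and trichotomy yields that $\neg(\ge \Gamma)$ is exactly $< \Gamma$) is a sensible extra detail the paper leaves implicit.
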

\begin{lemma} If 
\begin{equation} \label{eq_claim_condition}
\min_{\substack{S \subset \llbracket J \rrbracket,\, \left| S \right| = p } } \sum_{i=1}^{\hat I+k} \min_{j \in S  } C_{ij} < \Gamma,
\end{equation}
that is, if \eqref{opt_orig} has a solution, then  
\begin{equation} \label{eq_cost_simp}
\argmin{ \substack{T \subset  S \subset \llbracket J \rrbracket,\, \left| S \right| = p, \\ J_i \cap S \ne \O \, \forall i \in \llbracket \hat I \rrbracket } } \sum_{i=1}^{\hat I} \min_{j \in J_i \cap S } C_{ij} =
\argmin{\substack{S \subset \llbracket J \rrbracket,\, \left| S \right| = p } } \sum_{i=1}^{\hat I+k} \min_{j \in S  } C_{ij}. 
\end{equation} 
\end{lemma}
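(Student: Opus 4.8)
The plan is to show the two argmin sets in \eqref{eq_cost_simp} coincide by reusing the two facts already extracted in the proof of Lemma~\eqref{le1}. Writing $\mathcal{F} = \{S \subset \llbracket J \rrbracket : T \subset S,\ |S| = p,\ J_i \cap S \ne \emptyset\ \forall i \in \llbracket \hat I \rrbracket\}$ for the feasible region of \eqref{opt_orig}, and abbreviating the two objectives as $f(S) = \sum_{i=1}^{\hat I} \min_{j \in J_i \cap S} C_{ij}$ and $g(S) = \sum_{i=1}^{\hat I + k} \min_{j \in S} C_{ij}$, I would first record that the chain of equalities \eqref{eq_just} gives $g(S) = f(S) < \Gamma$ for every $S \in \mathcal{F}$, while the converse half of that proof gives $g(S) \ge \Gamma$ for every $S$ with $|S| = p$ lying outside $\mathcal{F}$.

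Next I would invoke the hypothesis \eqref{eq_claim_condition}, namely $\min_{|S| = p} g(S) < \Gamma$. The key step is to observe that this strict inequality forces every global minimizer of $g$ into $\mathcal{F}$: if some minimizing $S$ were constraint-violating, then $g(S) \ge \Gamma$ would contradict the hypothesis. Splitting the minimization as $\min_{|S|=p} g(S) = \min\big(\min_{S \in \mathcal{F}} g(S),\ \min_{S \notin \mathcal{F},\,|S|=p} g(S)\big)$, the first inner minimum equals $\min_{S \in \mathcal{F}} f(S) < \Gamma$ and the second is at least $\Gamma$, so the overall minimum equals $\min_{S \in \mathcal{F}} f(S)$ and is attained only inside $\mathcal{F}$.

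From here the equality of argmin sets is immediate: the minimizers of $g$ over all $S$ with $|S| = p$ are precisely the $S \in \mathcal{F}$ with $f(S) = \min_{S' \in \mathcal{F}} f(S')$, which is exactly the left-hand argmin in \eqref{eq_cost_simp}. I expect no genuine obstacle here, since the argument is pure bookkeeping built on the penalty-cost construction of $C$ and $\Gamma$. The only point demanding care is the logical step that the strict inequality excludes every constraint-violating $S$ from the optimizer set, so that dropping the explicit constraints $T \subset S$ and $J_i \cap S \ne \emptyset$ in passing to the unconstrained $p$-median form \eqref{opt_p_median} neither enlarges nor shrinks the set of optimal suites.
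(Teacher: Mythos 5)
Your proposal is correct and follows essentially the same route as the paper: the paper likewise first asserts that every minimizer of the unconstrained problem satisfies $T \subset S^*$ and $J_i \cap S^* \ne \O$ for all $i \in \llbracket \hat I \rrbracket$ (your step that constraint-violating $S$ have $g(S) \ge \Gamma$ while the minimum is $< \Gamma$), and then collapses $g$ to $f$ on the feasible set via the same chain of equalities used in \eqref{eq_just}. The only difference is presentational: you make explicit the justification that the paper compresses into ``by construction of $C$,'' which is a harmless and arguably helpful elaboration.
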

\begin{proof}
If the inequality \eqref{eq_claim_condition} holds, then, by construction of $C$, $\forall S^* \in \argmin{\substack{S \subset \llbracket J \rrbracket,\, \left| S \right| = p } } \sum_{i=1}^{\hat I+k} \min_{j \in S  } C_{ij}$ the following properties hold:
\begin{enumerate}[(a)]
 \item  $T \subset S^*$ 
 \item  $J_i \cap S^* \ne \O \; \forall i \in \llbracket \hat I \rrbracket $.
 \end{enumerate}
 Consequently,
 \begin{equation} \label{eq_der1}
 \begin{split}
 \argmin{\substack{S \subset \llbracket J \rrbracket,\, \left| S \right| = p } } \sum_{i=1}^{\hat I+k} \min_{j \in S  } C_{ij} &= \argmin{ \substack{T \subset  S \subset \llbracket J \rrbracket,\, \left| S \right| = p, \\ J_i \cap S \ne \O \, \forall i \in \llbracket \hat I \rrbracket } } \sum_{i=1}^{\hat I+k} \min_{j \in S  } C_{ij} \\
 &= \argmin{ \substack{T \subset  S \subset \llbracket J \rrbracket,\, \left| S \right| = p, \\ J_i \cap S \ne \O \, \forall i \in \llbracket \hat I \rrbracket } } \left[ \sum_{i=1}^{\hat I} \min_{j \in S  } C_{ij} +\sum_{i=\hat I+1}^{\hat I+k} \min_{j \in S  } C_{ij}\right]  \\
 &= \argmin{ \substack{T \subset  S \subset \llbracket J \rrbracket,\, \left| S \right| = p, \\ J_i \cap S \ne \O \, \forall i \in \llbracket \hat I \rrbracket } } \sum_{i=1}^{\hat I} \min_{j \in S  } C_{ij}   \\
 &= \argmin{ \substack{T \subset  S \subset \llbracket J \rrbracket,\, \left| S \right| = p, \\ J_i \cap S \ne \O \, \forall i \in \llbracket \hat I \rrbracket } } \sum_{i=1}^{\hat I} \min_{j \in J_i \cap S } C_{ij}.
 \end{split}
 \end{equation}
 The first equality follows from properties (a) and (b). The third equality holds because $T \subset S$ implies that $\min_{j \in S  } C_{ij} = C_{i T_{i - \hat I}} = 0 \quad \forall i \in \left\{\hat I+1,\hat I+2,\ldots,\hat I+k \right\}$. The fourth equality holds because $S = \left( J_i \cap S \right) \cup \left( \left(\llbracket J \rrbracket \setminus J_i \right) \cap S \right)$, $C_{ij} < \Gamma \; \forall j \in J_i $, $C_{ij} = \Gamma \; \forall j \in \llbracket J \rrbracket \setminus J_i $,  and $J_i \cap S \ne \O \; \forall i \in \llbracket \hat I \rrbracket$. 
\end{proof}
In summary, if \eqref{opt_p_median} does not have a solution with cost less than $\Gamma$, then \eqref{opt_orig} does not have a solution, and if \eqref{opt_p_median} does have a solution with cost less than $\Gamma$, then \eqref{opt_orig} has a solution and the solution set realized by \eqref{opt_orig} equals that realized by \eqref{opt_p_median}. 

\paragraph{Solving the $p$-Median Problem} The $p$-median problem is NP-hard \cite{kariv1979algorithmic}. There are many methods to solve the $p$-median problem including MILP, Lagrangian relaxation, heuristics (e.g., myopic algorithm, neighborhood search,
exchange algorithm, Lin--Kernighan neighborhood exchange algorithm), metaheuristics (e.g., simulated annealing, genetic
algorithm, tabu search, heuristic concentration, variable neighborhood
search, ant colony optimization, scatter search, and GRASP), and hyper-heuristics \cite{daskin2013networkChap6,daskin2015p,kwon2019juliaChap10,reese2006solution,mladenovic2007p,ren2010ant}. 

The exchange algorithm \cite{teitz1968heuristic} is a simple heuristic which starts with a feasible solution $S$ comprised of $p$ candidate facilities and finds a pair of facilities, $a \in S$ and $b \in S^{\mathsf{c}}$, such that the new feasible solution $\left( S \setminus \left\{a\right\} \right) \cup \left\{b\right\}$ decreases the cost of serving all the customers compared to the original feasible solution $S$. This exchange or swapping procedure is repeated iteratively until no further improvement is realized, depending on the definition of the local neighborhood about a feasible solution. There are several variations of the exchange algorithm which search over different local neighborhoods, including the first improvement \cite{whitaker1983fast}, best improvement \cite{hansen1997variable}, and Lin--Kernighan \cite{kochetov2005large} neighborhoods. Moreover, there are several efficient implementations of the exchange algorithm \cite{whitaker1983fast,densham1992strategies,hansen1997variable,resende2007fast} which provide significant computational savings compared to a na\"{\i}ve implementation. Due to its simplicity, the exchange algorithm is only effective at solving very small instances of the $p$-median problem. However, the exchange algorithm is often used as a local search algorithm by other more complicated solution methods, such as Lagrangian relaxation and metaheuristics.

Given $n \in \mathbb{N}$ customers, $m \in \mathbb{N}$ candidate facilities, $p \in \llbracket m \rrbracket$ candidate facilities to open, and nonnegative costs $d \in \mathbb{R}_{\ge 0}^{n \times m}$, where $d_{ij} $ is the cost of serving customer $i \in \llbracket n \rrbracket$ with candidate facility $j \in \llbracket m \rrbracket$, the MILP formulation of the $p$-median problem \eqref{opt_p_median_gen} is \cite{daskin2015p,kwon2019juliaChap10}
\begin{equation} \label{opt_p_median_milp}
	\begin{split} 
		\min_{x,y} \sum_{i =1}^n &\sum_{j = 1}^m d_{i j}  x_{ij} \quad \ni \quad \textrm{ {\tiny - minimize the cost of serving all the customers subject to the following constraints:}} \\
		& y_j \in \{0,1\} \quad \forall j \in \llbracket m \rrbracket, \quad \textrm{ {\tiny - whether candidate facility $j$ is open}} \\
		& x_{ij} \ge 0 \quad \forall i \in \llbracket n \rrbracket \quad \forall j \in \llbracket m \rrbracket, \quad \textrm{ {\tiny - whether customer $i$ is served by candidate facility $j$}} \\
		& \sum_{j=1}^m y_j = p, \quad \textrm{ {\tiny - open $p$ candidate facilities}} \\
		& \sum_{j =1}^m x_{ij} = 1 \quad \forall i \in \llbracket n \rrbracket, \quad \textrm{ {\tiny - each customer must be served by exactly one candidate facility}} \\
		& x_{ij} \le y_j \quad \forall i \in \llbracket n \rrbracket \quad \forall j \in \llbracket m \rrbracket. \quad \textrm{ {\tiny - if candidate facility $j$ is closed, no customer can be served by it}}
	\end{split}
\end{equation}
In \eqref{opt_p_median_milp}, $y \in \mathbb{Z}_2^m$ determines which candidate facilities are open and $x \in \mathbb{R}_{\left[0,1\right]}^{n \times m}$ determines which open facility is assigned to each customer. \eqref{opt_p_median_milp} formulates $x \in \mathbb{R}_{\left[0,1\right]}^{n \times m}$ instead of $x \in \mathbb{Z}_2^{n \times m}$, because the former imposes far fewer integrality constraints and therefore is much more computationally tractable for a MILP solver. However, in the solution of \eqref{opt_p_median_milp}, $x_{ij}$ may not be 0 or 1 if customer $i \in \llbracket n \rrbracket$ may be served by more than one open facility with the same minimum cost. It is trivial to transform a solution $x \in \mathbb{R}_{\left[0,1\right]}^{n \times m}$ of \eqref{opt_p_median_milp} into an equivalent solution $\tilde x \in \mathbb{Z}_2^{n \times m}$ as follows.  
Let $S \equiv \left\{ j \in \llbracket m \rrbracket \colon y_j = 1 \right\} \subset \llbracket m \rrbracket$ denote the open facilities in the solution of \eqref{opt_p_median_milp}. Then, for each  $i \in \llbracket n \rrbracket$, let 
\begin{equation}
	j_i \equiv \argmin{k \in S} d_{ik},
\end{equation} 
breaking ties arbitrarily if the minimum cost of serving customer $i$ over the open facilities in $S$ is not unique. Then for each  $i \in \llbracket n \rrbracket$ and for each $j \in \llbracket m \rrbracket$, $\tilde x \in \mathbb{Z}_2^{n \times m}$ is constructed via:
\begin{equation} \label{eq_tilde_x}
	\tilde x_{ij} = \begin{cases}   
		1  &\textrm{if} \; j=j_i, \\ 
		0 &\textrm{if} \; j \in \llbracket m \rrbracket \setminus \left\{j_i \right\}.
	\end{cases}
\end{equation} 
A third-party MILP solver, such as Gurobi \cite{Gurobi_online}, CPLEX \cite{CPLEX_online}, or CBC \cite{CBC_online}, must be used to solve the MILP \eqref{opt_p_median_milp}; the reader is referred to ``Solving the Fitting MILP" in Section~\ref{sec_fitting_MILP} for more information about the particular MILP solvers mentioned here.  However, MILP solvers are only able to solve fairly small instances of the $p$-median problem to within a prescribed absolute or relative optimality gap of the globally optimal solution. Because MILP solvers rely on branch-and-bound algorithms, if an MILP solver is able to find a solution (not necessarily the globally optimal solution), it is also able to provide a lower bound of the globally optimal solution via the absolute or relative optimality gap.  

The objective function $\mathcal{J}$ that is minimized in \eqref{opt_p_median_milp} is $\mathcal{J}\left(x\right) \equiv \sum_{i =1}^n \sum_{j = 1}^m d_{i j}  x_{ij}$. Construct the Lagrangian function $\mathcal{L}$ by adjoining the fourth constraint, $\sum_{j =1}^m x_{ij} = 1 \quad \forall i \in \llbracket n \rrbracket$, in \eqref{opt_p_median_milp} to the objective function $\mathcal{J}$ via the vector of Lagrange multipliers $\lambda \in \mathbb{R}^n$: 
\begin{equation} \label{eq_lagrangian}
\begin{split}
	\mathcal{L}\left(x,\lambda \right) \equiv \sum_{i =1}^n \sum_{j = 1}^m d_{i j}  x_{ij}+\sum_{i=1}^n \lambda_i \left( 1- \sum_{j =1}^m x_{ij} \right) &= \sum_{i =1}^n \sum_{j = 1}^m \left( d_{i j} - \lambda_i \right)  x_{ij}+\sum_{i=1}^n \lambda_i \\ &= \sum_{j = 1}^m \sum_{i =1}^n \left( d_{i j} - \lambda_i \right)  x_{ij}+\sum_{i=1}^n \lambda_i.
\end{split} 
\end{equation}
 The primal form of the MILP formulation \eqref{opt_p_median_milp} of the $p$-median problem is
\begin{equation} \label{opt_primal}
	\begin{split} 
		\min_{x,y} \max_{\lambda} \; &\mathcal{L}\left(x,\lambda \right)  \quad \ni \\
		& y_j \in \{0,1\} \quad \forall j \in \llbracket m \rrbracket,  \\
		& x_{ij} \in \{0,1\} \quad \forall i \in \llbracket n \rrbracket \quad \forall j \in \llbracket m \rrbracket,  \\
		& \sum_{j=1}^m y_j = p,  \\
		& x_{ij} \le y_j \quad \forall i \in \llbracket n \rrbracket \quad \forall j \in \llbracket m \rrbracket. 
	\end{split}
\end{equation}
The primal problem \eqref{opt_primal} is equivalent to \eqref{opt_p_median_milp}. Now construct the dual function $\mathcal{D}$:
\begin{equation} \label{eq_dual}
	\begin{split} 
		\mathcal{D}\left(\lambda\right) \equiv \min_{x,y} \; &\mathcal{L}\left(x,\lambda \right)  \quad \ni \\
		& y_j \in \{0,1\} \quad \forall j \in \llbracket m \rrbracket,  \\
		& x_{ij} \in \{0,1\} \quad \forall i \in \llbracket n \rrbracket \quad \forall j \in \llbracket m \rrbracket,  \\
		& \sum_{j=1}^m y_j = p,  \\
		& x_{ij} \le y_j \quad \forall i \in \llbracket n \rrbracket \quad \forall j \in \llbracket m \rrbracket.
	\end{split}
\end{equation}
The following are important properties of the dual function $\mathcal{D}$.
\begin{enumerate}
	\item For fixed $\lambda$, it is trivial to construct $x \in \mathbb{Z}_2^{n \times m}$ and $y \in \mathbb{Z}_2^m$ that minimize the Lagrangian $\mathcal{L}$ subject to satisfying the constraints in \eqref{eq_dual} \cite{daskin2013networkChap6,daskin2015p,kwon2019juliaChap10}. Therefore, it is simple to solve the constrained Lagrangian minimization problem and compute $\mathcal{D}\left(\lambda\right)$.
	\item For fixed $\lambda$, the solution $x \in \mathbb{Z}_2^{n \times m}$ and $y \in \mathbb{Z}_2^m$ of $\mathcal{D}\left(\lambda\right)$ may be easily transformed into a corresponding primal solution, i.e., a feasible solution of the $p$-median problem \cite{daskin2013networkChap6,daskin2015p,kwon2019juliaChap10}.
	\item For fixed $\lambda$, $\mathcal{D}\left(\lambda\right)$ is a lower bound of the solution to the primal problem \eqref{opt_primal} \cite{conforti2014integer}.  
	\item $\mathcal{D}$ is concave as a function of $\lambda$ \cite{conforti2014integer}.
	\item $\mathcal{D}$ is piecewise linear, and is therefore nondifferentiable, as a function of $\lambda$ \cite{conforti2014integer}.
\end{enumerate}
The dual form of the MILP formulation \eqref{opt_p_median_milp} of the $p$-median problem is
\begin{equation} \label{opt_dual}
\begin{split}
 \max_{\lambda} \mathcal{D}\left(\lambda\right) = \max_{\lambda} \min_{x,y}  \; &\mathcal{L}\left(x,\lambda \right) \quad \ni \\
 & y_j \in \{0,1\} \quad \forall j \in \llbracket m \rrbracket,  \\
 & x_{ij} \in \{0,1\} \quad \forall i \in \llbracket n \rrbracket \quad \forall j \in \llbracket m \rrbracket,  \\
 & \sum_{j=1}^m y_j = p, \\
 & x_{ij} \le y_j \quad \forall i \in \llbracket n \rrbracket \quad \forall j \in \llbracket m \rrbracket.
\end{split}
\end{equation}
Since the dual function $\mathcal{D}$ is concave and nondifferentiable, the dual problem \eqref{opt_dual} may be solved by subgradient optimization \cite{daskin2013networkChap6,daskin2015p,kwon2019juliaChap10}, which is a particular method of convex optimization. The solution to the dual problem \eqref{opt_dual} is a lower bound to the solution of the  primal problem \eqref{opt_primal} \cite{conforti2014integer}; in some cases, the solution to the dual problem \eqref{opt_dual} may even coincide with the solution to the primal problem \eqref{opt_primal}. In the Lagrangian relaxation approach to solving the $p$-median problem \eqref{opt_p_median_gen}, the dual problem \eqref{opt_dual} is solved via iterative subgradient optimization, starting from an initial guess $\lambda$ of the Lagrange multipliers. In each iteration of the subgradient optimization, 1) a lower bound is constructed by solving the constrained Lagrangian minimization problem and computing the dual function $\mathcal{D}$ in \eqref{eq_dual} for the current estimate $\lambda$ of the Lagrange multipliers, 2) a corresponding upper bound is constructed from the lower bound, and 3) the current estimate $\lambda$ of the Lagrange multipliers is updated. If in a given iteration, a new smallest upper bound is found, it may be further improved via a heuristic such as the exchange algorithm. As described in \cite{daskin2015p}, if in a given iteration, a new smallest upper bound or a new largest lower bound is found, candidate facilities can be forced into and out of the optimal solution, which can reduce the computational time of Lagrangian relaxation. By keeping track of the smallest upper bound and the largest lower bound found over all iterations, Lagrangian relaxation provides both upper and lower bounds of the globally optimal solution to the $p$-median problem. The iterations continue until some stopping criterion is satisfied, such as executing a maximum number of iterations or realizing a prescribed absolute or relative optimality gap. Instead of adjoining the fourth constraint, $\sum_{j =1}^m x_{ij} = 1 \quad \forall i \in \llbracket n \rrbracket$, in \eqref{opt_p_median_milp} to the objective function $\mathcal{J}$, it is possible to adjoin the fifth constraint, $x_{ij} \le y_j \quad \forall i \in \llbracket n \rrbracket \quad \forall j \in \llbracket m \rrbracket$, in \eqref{opt_p_median_milp} to the objective function $\mathcal{J}$ via a matrix of nonnegative Lagrange multipliers $\lambda \in \mathbb{R}_{\ge 0}^{n \times m}$; this alternative approach is discussed in \cite{daskin2013networkChap6}.

POPSTAR \cite{POPSTAR_online} is a freely available $p$-median problem solver implemented in C++. POPSTAR solves the $p$-median problem via a hybrid metaheuristic that combines GRASP with path-relinking and the genetic algorithm \cite{resende2004hybrid}. Moreover, POPSTAR performs local searches via a fast implementation of the exchange algorithm \cite{resende2007fast}. Reference \cite{mladenovic2007p}, published in 2007, comprehensively surveyed many methods, excluding hyper-heuristics, for solving the $p$-median problem and concluded that the overall algorithm implemented by POPSTAR is the best. dc2 \cite{dc2_online} is a recent, freely available $p$-median problem solver implemented in C that is still under development. dc2 solves the $p$-median problem via several metaheuristics, including GRASP, path-relinking, and a new metaheuristic called disperse construction, and performs local searches via several fast implementations of the exchange algorithm \cite{whitaker1983fast,hansen1997variable,resende2007fast}. Unlike POPSTAR, dc2 is multithreaded and therefore is able to exploit the parallelism offered by multicore CPUs. POPSTAR and dc2 can solve instances of the $p$-median problem with several thousand customers, several thousand candidate facilities, and $p \le 20$ in a few minutes on a modern laptop.  Several metaheuristics, including GRASP and the genetic algorithm, have been implemented on GPGPUs to solve the $p$-median problem \cite{santos2010parallel,albdaiwi2017gpu}.

\paragraph{Validation} The box suite recommendation may be validated by packing the optimization shipment set and a much larger randomly sampled shipment set into the recommended suite $S^*$, where each shipment is assigned to the minimum cost box in the suite into which it fits. Several metrics, such as percentage of shipments packed into each box, percentage of total cost shipped by each box, percentage of all box outer volume shipped by each box, and percentage liquid void space, collected for both packings can be compared. If the metrics are similar, then the cost savings afforded by the recommended suite $S^*$ should be expected to hold on all shipments. An alternative validation method is to pack the optimization shipment set and a much larger randomly sampled shipment set into several suites $Q \cup S^*$, where each suite $S \in Q$ satisfies $T \subset S$, $\left| S \right| = p$, and $J_i \cap S \ne \O \, \forall i \in \llbracket \hat I \rrbracket $, and ensure that the cost reductions (comparing the cost of each suite $S \in Q$ to the cost of $S^*$) predicted by the optimization shipment set agree with those predicted by the large shipment set. If the various metrics or cost reductions are dissimilar, then Algorithm~\eqref{alg_bsr} should be run again using a larger set of randomly sampled historical customer shipments.

\paragraph{Fine-Tuning} The recommended suite $S^*$ can be further refined (or fine-tuned) by running Algorithm~\eqref{alg_bsr} again on a new set of candidate boxes $J'$ obtained by taking small variations above and below the inner lengths, widths, and heights of the unlocked boxes $S^* \setminus T$ in the recommended suite $S^*$. Like the original set of candidate boxes $J$, the new set of candidate boxes $J'$ must also include the $k$ locked boxes.

\paragraph{Modifications to Handle Height-Oriented and Bottom-Resting Cartons} Some shipments may contain cartons that must be packed vertically, so that their height dimensions must be parallel to the box's height dimension. Such cartons will be called height-oriented (HO). This constraint is quite common in online retail, e.g., liquid detergent often must be HO when packed into a shipping box to prevent spillage. When packed into a box, a HO carton may be rotated in only 2 (instead of 6) possible ways. In order to handle this additional constraint, Algorithm~\eqref{alg_bsr} must be modified in the following ways. Lines 1-11 in Algorithm~\eqref{alg_bsr} must be replaced with the pseudocode given in Algorithm~\eqref{alg_bsr_mod1}, and lines 24-26 in Algorithm~\eqref{alg_bsr} must be replaced with the pseudocode given in Algorithm~\eqref{alg_bsr_mod2}. HO cartons may have the additional constraint that they rest at the bottom of the box, to encourage stability and mitigate the possibility of tipping over. To handle this additional constraint, the Boolean condition in line 33 in Algorithm~\eqref{alg_bsr}, which checks to see if the cartons can be stacked along any of the 3 box dimensions, must be replaced with the Boolean condition
\begin{equation} \label{bc_stack_mod}
\left(\mathring p_i \le \tilde x_j \right) \vee \left(\mathring q_i \le \tilde y_j \right) \vee \left(H_i \le 1 \wedge \mathring r_i \le \tilde z_j \right),
\end{equation}
where $H_i$ (see lines 1 and 5 in Algorithm~\eqref{alg_bsr_mod2}) denotes the number of HO cartons in shipment $i$, since stacking the cartons along the box's height dimension is valid only if there are less than 2 HO cartons in the shipment. More generally, only a proper subset of HO cartons may need to rest at the bottom of the box or some  non-HO cartons may need to rest at the bottom of the box. Such cartons will be called bottom-resting (BR). To handle this more general case, let $R_i$ denote the number of BR cartons in shipment $i$. Then the Boolean condition in line 33 in Algorithm~\eqref{alg_bsr} must be replaced with the Boolean condition
\begin{equation} \label{bc_stack_mod2}
\left(\mathring p_i \le \tilde x_j \right) \vee \left(\mathring q_i \le \tilde y_j \right) \vee \left(R_i \le 1 \wedge \mathring r_i \le \tilde z_j \right),
\end{equation}
since stacking the cartons along the box's height dimension is valid only if there are less than 2 BR cartons in the shipment. \eqref{bc_stack_mod} is valid instead of \eqref{bc_stack_mod2} only if all HO cartons are BR and there are no non-HO cartons that are BR, in which case $H_i = R_i$. The paragraph ``Special Packing Constraints" in Section~\ref{sec_fitting_MILP} discusses how to enforce HO and BR packing constraints in the fitting MILP.

\begin{algorithm}
	\caption{Box Suite Recommendation: HO Modification I}  \label{alg_bsr_mod1} 
	\begin{algorithmic}[1]
		\For {$j=1$ to $J$} \Comment{Iterate over candidate boxes.}
		\skipnumber[1]{Sort the length and width box inner dimensions in nonincreasing order for shipments with HO cartons.}
		\State $\left(\bar x_{j},\bar y_{j} \right) \gets \textproc{Sort} \left(x_{j},y_{j} \right) \quad \quad \bar z_{j} \gets  z_j$
		\skipnumber[1]{Sort box inner dimensions in nonincreasing order for shipments with no HO cartons.}
		\State $\left(\ddot x_{j},\ddot y_{j},\ddot z_{j} \right) \gets \textproc{Sort} \left(x_{j},y_{j},z_{j} \right) $ 
		\EndFor
		\skipnumber[0]{Determine into which candidate boxes each candidate box nests.}
		%\\ \LeftComment{0}{Determine into which candidate boxes each candidate box nests.}
		\For {$j=1$ to $J$} \Comment{Iterate over candidate boxes.}
		\skipnumber[1]{$\Psi_j$ stores the set of boxes into which box $j$ nests, only permitting the 2 HO rotations for nesting.}
		\State $\Psi_j \gets \left\{j \right\}$ 
		\skipnumber[1]{$\Sigma_j$ stores the set of boxes into which box $j$ nests, permitting any of the 6 possible rotations for nesting.}
		\State $\Sigma_j \gets \left\{j \right\}$
		\For {$k \gets j+1$ to $J$} \Comment{Iterate over equal or larger volume candidate boxes.}
		\If{$\left(\bar x_j \le \bar x_k \right) \wedge \left(\bar y_j \le \bar y_k \right) \wedge \left(\bar z_j \le \bar z_k \right) $} \Comment{If box $j$ nests inside box $k$ in a HO way.}
		\State $\textproc{push}\left(\Psi_j,k\right)$
		\EndIf
		\If{$\left(\ddot x_j \le \ddot x_k \right) \wedge \left(\ddot y_j \le \ddot y_k \right) \wedge \left(\ddot z_j \le \ddot z_k \right) $} \Comment{If box $j$ nests inside box $k$.}
		\State $\textproc{push}\left(\Sigma_j,k\right)$
		\EndIf
		\EndFor
		\EndFor
\end{algorithmic}
\end{algorithm}

\begin{algorithm}
	\caption{Box Suite Recommendation: HO Modification II}  \label{alg_bsr_mod2} 
	\begin{algorithmic}[1]
\State $H_i \gets 0$ \Comment{$H_i$ records the number of HO cartons in shipment $i$.}
\For {$n=1$ to $N_i$} \Comment{Iterate over cartons in shipment $i$.}
\If{carton $n$ is HO} \Comment{If carton $n$ must be HO when packed into a box.}
\skipnumber[2]{Sort carton length and width outer dimensions in nonincreasing order for HO packing.}
\State $\left(\tilde p_{in},\tilde q_{in} \right) \gets \textproc{Sort} \left(p_{in},q_{in} \right) \quad \quad  \tilde r_{in} \gets r_{in}$ 
%\skipnumber[2]{Shipment $i$ contains a HO carton, and therefore must be packed in a special way into boxes.}
\State $H_i \gets H_i+1 $ \Comment{Increment $H_i$.}
\Else \Comment{If carton $n$ need not be HO when packed into a box.}
\skipnumber[2]{Sort carton outer dimensions in nonincreasing order for non-HO packing.}
\State $\left(\tilde p_{in},\tilde q_{in},\tilde r_{in} \right) \gets \textproc{Sort} \left(p_{in},q_{in},r_{in} \right)$
\EndIf
\EndFor
\If{$H_i > 0$} \Comment{If shipment $i$ contains a HO carton.}
\State $\left\{\Theta_j\right\}_{j=j_0}^J \gets \left\{\Psi_j\right\}_{j=j_0}^J \quad \quad \left\{\left(\tilde x_{j},\tilde y_{j},\tilde z_{j} \right)\right\}_{j=j_0}^J \gets \left\{\left(\bar x_{j},\bar y_{j},\bar z_{j} \right)\right\}_{j=j_0}^J $ 
\Else \Comment{If shipment $i$ does not contain a HO carton.}
\State $\left\{\Theta_j\right\}_{j=j_0}^J \gets \left\{\Sigma_j\right\}_{j=j_0}^J \quad \quad \left\{\left(\tilde x_{j},\tilde y_{j},\tilde z_{j} \right)\right\}_{j=j_0}^J \gets \left\{\left(\ddot x_{j},\ddot y_{j},\ddot z_{j} \right)\right\}_{j=j_0}^J $ 
\EndIf
\end{algorithmic}
\end{algorithm}

\paragraph{Numerical Experiments} Three 10 box suites are constructed according to Algorithm~\eqref{alg_bsr}. In this description of the numerical experiments performed to obtain the suites, the inner dimensions of a box and outer dimensions of an item are always assumed to be sorted in nonincreasing order. The suites are selected from a set of 5,284 candidate boxes, which is the set of boxes with integral sorted inner dimensions $\left\{\left(x,y,z \right) \in \mathbb{N}^3 \colon x \ge y \ge z, 40 \ge x \ge 5, 20 \ge y \ge 4, 16 \ge z \ge 1 \right\}$ and where the smallest inner volume box has sorted inner dimensions $\left(5,4,1\right)$ and the largest inner volume box has sorted inner dimensions $\left(40,20,16\right)$. The first suite does not lock any boxes, the second suite locks the box with sorted inner dimensions $\left(12,7,6\right)$, and the third suite locks the two boxes with sorted inner dimensions $\left(12,7,6\right)$ and $\left(16,12,6\right)$. 

The suites are optimized based on 15,000 shipments, where each shipment is comprised of items selected from a set of 2,324 candidate items. Each item is assumed to be a 3D rectangular carton with integral outer dimensions, so that foldable items are excluded. No two items have the same sorted outer dimensions. Moreover, none of the items have special packing constraints, such as having to be height-oriented or bottom-resting. Each shipment is comprised of 1 to 8 unique items, where each unique item has quantity 1 to 8 such that there are no more than 8 total cartons in the shipment. 

The set of candidate boxes, the set of candidate items, and the set of shipments are available on Mendeley Data in three separate \texttt{CSV} files, \texttt{boxes.csv}, \texttt{items.csv}, and \texttt{shipments.csv}, respectively \cite{https://doi.org/10.17632/f2bnnnm5zc.3}. Each candidate box occupies a row in \texttt{boxes.csv} comprised of a unique integral box ID and three integral inner dimensions sorted in nonincreasing order. Each candidate item occupies a row in \texttt{items.csv} comprised of a unique integral item ID and three integral outer dimensions sorted in nonincreasing order. Each row in \texttt{shipments.csv} represents all or part of a shipment and is comprised of a unique integral shipment ID, an item's integral ID, the item's quantity, and the item's outer dimensions sorted in nonincreasing order. All the items in a shipment are assigned the same unique integral shipment ID and therefore can be grouped together based on it.

The 15,000 shipments, consisting of 1 to 8 cartons (i.e., no foldable items), were fit into the 5,284 candidate boxes. In all, it took 1.4 hours to solve all the fitting problems. Of the 15,000 shipments, 20.88\% (3,132 shipments) consist of 4 or more cartons, for which the fitting MILP \eqref{eq_orient}-\eqref{eq_lbb}  was solved by CPLEX v12.10.0 since brute force fitting algorithms were used for shipments consisting of 1, 2, or 3 cartons. 173,066 fitting MILPs had to be solved. MILPs taking more than 5 seconds (s) to solve were terminated early, in which case the underlying shipment was declared to not fit into the underlying candidate box. Of the 173,066 fitting MILPs, 29,618 were feasible (a fit was found), 143,393 were infeasible (no fit possible), and 55 were terminated early due to the 5[s] time limit. Of the 15,000 shipments, 112 shipments did not fit into any candidate box, so that only 14,888 shipments are packable. The reader is referred to ``Benchmarking I" in Section~\ref{sec_fitting_MILP} for a performance comparison of different MILP solvers on various formulations of the fitting MILP.  

The cost of shipping a shipment in a particular candidate box is the box's inner volume, so that each optimal suite is a set of 10 boxes that ships all the packable shipments with minimum total inner volume shipped, subject to locking no, one, or two boxes.  Given the cost matrix $C$ (obtained from the fitting matrix $B$), POPSTAR and dc2 were used to solve three $p$-median problems \eqref{opt_p_median}, one for each suite. The three suites reported in Tables~\ref{tab_suite_free}-\ref{tab_suite_lock2} are the best found by POPSTAR and dc2 for each $p$-median problem. Given the cost matrix, POPSTAR was able to find each suite in under 7 minutes, while dc2 was able to find each suite in under 3 minutes. POPSTAR was used with the default parameters \texttt{-graspit 32 -elite 10} for the first two suites and with the non-default parameters \texttt{-graspit 64 -elite 20} for the third suite. dc2 was used with the non-default parameters \texttt{-t12 -L -M -B0 -R50 rank:5 \_rank:5} for all three suites. POPSTAR was used with non-default parameters for the third suite, because POPSTAR used with the default parameters found a solution suboptimal (total inner volumed shipped is 27,377,482) to the one found by dc2. Lagrangian relaxation \cite{daskin2013networkChap6,daskin2015p,kwon2019juliaChap10}, as discussed in ``Solving the $p$-Median Problem" earlier in this section, determined the percentage relative optimality gaps of the first, second, and third suites to be 1.287\%, 1.101\%, and 1.087\%, respectively. The percentage relative optimality gap for a suite is defined as $\frac{\mathrm{UB}-\mathrm{LB}}{\mathrm{UB}}\cdot 100\%$, where $\mathrm{UB}$ is the total inner volume shipped by that suite and $\mathrm{LB}$ is the greatest lower bound of the minimum cost for that suite's $p$-median problem found by Lagrangian relaxation.

In Tables~\ref{tab_suite_free}-\ref{tab_suite_lock2}, the column labeled ``\% Liquid Void Volume Shipped" is the total empty volume shipped by a box divided by the total inner volume shipped by that box, and then scaled by 100\%. The empty volume for a shipment shipped in a box is measured as the box inner volume minus the shipment's liquid volume (sum of all the shipment's carton outer volumes). The total empty volume shipped by a box is the sum of the empty volumes over all the shipments shipped in that box. The total inner volume shipped by a box is the box's inner volume multiplied by the number of shipments shipped in that box. In the captions, the percentage liquid void volume shipped by a suite is the total empty volume shipped by the suite divided by the total inner volume shipped by the suite, scaled by 100\%.

Algorithm~\eqref{alg_bsr} was implemented in Julia v0.6.4 \cite{bezanson2017julia,Julia_online} using JuMP v0.18.5 \cite{DunningHuchetteLubin2017} to formulate the fitting MILP \eqref{eq_orient}-\eqref{eq_lbb}. CPLEX v12.10.0 was used to solve the fitting MILPs, while POPSTAR and dc2 were used to solve the $p$-median problems.   The software ran under the operating system Ubuntu 18.04.4 LTS (Bionic Beaver) on an Intel Core i7-3930K CPU @ 3.20 GHz with 6 physical cores (12 logical cores with hyper-threading) and 32GB RAM.

\begin{table}[h!]
	\center
		\begin{tabular}{ |c||c|c|c|c|c| } 
			\hline
			\# & ID & Inner Dimensions & Inner Volume & \shortstack{\% of Packable \\ Shipments Shipped} & \shortstack{\% Liquid Void \\ Volume Shipped} \\
			\hline \hline
			1 & 536 & $\left(11,7,4\right)$ & 308 & 35.30\% & 73.33\% \\ \hline 
			2 & 1426 & $\left(13,10,6\right)$ & 780 & 19.49\% & 58.03\% \\ \hline 
			3 & 2124 & $\left(19,14,5\right)$ & 1330 & 11.65\% & 57.54\% \\ \hline
			4 & 2705 & $\left(16,12,10\right)$ & 1920 & 9.54\% & 43.88\% \\ \hline
			5 & 3502 & $\left(32,19,5\right)$ & 3040 & 5.53\% & 59.76\% \\ \hline
			6 & 3688 & $\left(20,14,12\right)$ & 3360 & 6.51\% & 40.88\% \\ \hline
			7 & 4274 & $\left(25,17,11\right)$ & 4675 & 4.82\% & 40.90\% \\ \hline
			8 & 4854 & $\left(31,18,12\right)$ & 6696 & 3.68\% & 43.05\% \\ \hline
			9 & 5099 & $\left(26,20,16\right)$ & 8320 & 2.14\% & 40.37\% \\ \hline
			10 & 5284 & $\left(40,20,16\right)$ & 12800 & 1.35\% & 50.15\% \\ \hline
	\end{tabular}
	\caption{Best 10 box suite minimizing total inner volume shipped, found by POPSTAR and dc2. The boxes are listed by increasing inner volume. The total inner volume shipped by this suite is 26,917,098, which is within 1.287\% of the minimum possible total inner volume shipped. The percentage liquid void (box inner - liquid) volume shipped by this suite is 48.89\%. POPSTAR found this solution in 235.7[s] using the default parameters \texttt{-graspit 32 -elite 10}. dc2 found this solution in 168.2[s] using the non-default parameters \texttt{-t12 -L -M -B0 -R50 rank:5 \_rank:5}.}
	\label{tab_suite_free}
\end{table}

\begin{table}[h!]
	\center
	\begin{tabular}{ |c||c|c|c|c|c| } 
		\hline
		\# & ID & Inner Dimensions & Inner Volume & \shortstack{\% of Packable \\ Shipments Shipped} & \shortstack{\% Liquid Void \\ Volume Shipped} \\
		\hline \hline
		1 & 255 & $\left(10,6,3\right)$ & 180 & 25.51\% & 70.25\% \\ \hline 
		2 & \textbf{958} & $\mathbf{\left(12,7,6\right)}$ & 504 & 16.13\% & 59.83\% \\ \hline 
		3 & 1544 & $\left(18,12,4\right)$ & 864 & 15.82\% & 60.78\% \\ \hline
		4 & 2254 & $\left(15,12,8\right)$ & 1440 & 11.90\% & 48.33\% \\ \hline
		5 & 3132 & $\left(19,13,10\right)$ & 2470 & 9.92\% & 47.27\% \\ \hline
		6 & 3447 & $\left(31,19,5\right)$ & 2945 & 4.66\% & 62.28\% \\ \hline
		7 & 4021 & $\left(23,16,11\right)$ & 4048 & 6.72\% & 41.95\% \\ \hline
		8 & 4770 & $\left(25,18,14\right)$ & 6300 & 5.23\% & 43.62\% \\ \hline
		9 & 5082 & $\left(34,20,12\right)$ & 8160 & 2.52\% & 49.33\% \\ \hline
		10 & 5284 & $\left(40,20,16\right)$ & 12800 & 1.59\% & 47.66\% \\ \hline
	\end{tabular}
	\caption{Best 10 box suite locking the boldfaced box with ID 958 and minimizing total inner volume shipped, found by POPSTAR and dc2. The boxes are listed by increasing inner volume. The total inner volume shipped by this suite is 27,224,072, which is within 1.101\% of the minimum possible total inner volume shipped. The percentage liquid void (box inner - liquid) volume shipped by this suite is 49.47\%. POPSTAR found this solution in 205.9[s] using the default parameters \texttt{-graspit 32 -elite 10}. dc2 found this solution in 157.3[s] using the non-default parameters \texttt{-t12 -L -M -B0 -R50 rank:5 \_rank:5}.}
	\label{tab_suite_lock1}
\end{table}

\begin{table}[h!]
	\center
	\begin{tabular}{ |c||c|c|c|c|c| } 
		\hline
		\# & ID & Inner Dimensions & Inner Volume & \shortstack{\% of Packable \\ Shipments Shipped} & \shortstack{\% Liquid Void \\ Volume Shipped} \\
		\hline \hline
		1 & 509 & $\left(11,9,3\right)$ & 297 & 33.88\% & 73.66\% \\ \hline 
		2 & \textbf{958} & $\mathbf{\left(12,7,6\right)}$ & 504 & 10.38\% & 53.36\% \\ \hline 
		3 & \textbf{1918} & $\mathbf{\left(16,12,6\right)}$ & 1152 & 19.89\% & 58.67\% \\ \hline
		4 & 2807 & $\left(17,12,10\right)$ & 2040 & 10.79\% & 47.00\% \\ \hline
		5 & 3105 & $\left(32,19,4\right)$ & 2432 & 4.83\% & 65.61\%  \\ \hline
		6 & 3651 & $\left(20,15,11\right)$ & 3300 & 7.23\% & 42.81\% \\ \hline
		7 & 4213 & $\left(25,18,10\right)$ & 4500 & 4.67\% & 43.96\% \\ \hline
		8 & 4774 & $\left(31,17,12\right)$ & 6324 & 4.39\% & 43.81\% \\ \hline
		9 & 5099 & $\left(26,20,16\right)$ & 8320 & 2.51\% & 41.58\% \\ \hline
		10 & 5284 & $\left(40,20,16\right)$ & 12800 & 1.44\% & 51.42\% \\ \hline
	\end{tabular}
	\caption{Best 10 box suite locking the two boldfaced boxes with IDs 958 and 1918 and minimizing total inner volume shipped, found by POPSTAR and dc2. The boxes are listed by increasing inner volume. The total inner volume shipped by this suite is 27,370,900, which is within 1.087\% of the minimum possible total inner volume shipped. The percentage liquid void (box inner - liquid) volume shipped by this suite is 49.74\%. POPSTAR found this solution in 402.8[s] using the non-default parameters \texttt{-graspit 64 -elite 20}. dc2 found this solution in 142.0[s] using the non-default parameters \texttt{-t12 -L -M -B0 -R50 rank:5 \_rank:5}.}
	\label{tab_suite_lock2}
\end{table}

\section{Fitting MILP} \label{sec_fitting_MILP}
\paragraph{Introduction} 
Can $n \in \mathbb{N}$ 3D rectangular cartons, with positive outer lengths, widths, and heights $\left\{ \left(p_i,q_i,r_i\right) \right\}_{i=1}^n$, where $\left(p_i,q_i,r_i\right) \in \mathbb{R}_{> 0}^3$ for $i \in \llbracket n \rrbracket$, fit in a box with positive inner length, width, and height $\left(x,y,z\right) \in \mathbb{R}_{> 0}^3$, permitting orthogonal rotations of each carton? When packed into the box, it is assumed that a carton's edges must be parallel to those of the box, so that there are 6 possible orthogonal rotations of each carton. A right-handed orthogonal 3D Cartesian coordinate system is introduced in the box's frame, as depicted in Figure~\ref{fig_orientations}. The $X-$axis is parallel to the box length $x$, the $Y-$axis is parallel to the box width $y$, and the $Z-$axis is parallel to the box height $z$. The axes intersect orthogonally at $\left(0,0,0\right)$, which coincides with the left-back-bottom (lbb) corner of the box. Left to right is along the $X-$axis with 0 on the left and $x$ on the right. Back to front is along the $Y-$axis with 0 at the back and $y$ at the front. Bottom to top is along the $Z-$axis with 0 at the bottom and $z$ at the top. Whether the $n$ cartons fit into the box can be determined by checking the feasibility (or satisfiability) of a set of linear inequality constraints depending on a set of continuous and binary variables. These constraints and variables form a feasibility MILP called the fitting MILP. Since any feasibility MILP is NP-complete \cite{johnson1979computers}, the fitting MILP is NP-complete. The next few paragraphs present the constraints and variables that comprise the fitting MILP. 

\paragraph{Orientation Constraints} 
For each carton $i \in \llbracket n \rrbracket$, there are 9 orientation binary variables that indicate in which of the 6 possible ways each carton is oriented in the box. $l_{Xi},l_{Yi},l_{Zi} \in \left\{0,1\right\}$ indicate whether the length dimension $p_i$ of carton $i$ is parallel to the $X-$, $Y-$, or $Z-$axis, $w_{Xi},w_{Yi},w_{Zi} \in \left\{0,1\right\}$ indicate whether the width dimension $q_i$ of carton $i$ is parallel to the $X-$, $Y-$, or $Z-$axis, and $h_{Xi},h_{Yi},h_{Zi} \in \left\{0,1\right\}$ indicate whether the height dimension $r_i$ of carton $i$ is parallel to the $X-$, $Y-$, or $Z-$axis. The reader is referred to Figure~\ref{fig_orientations} for a 3D illustration of the meaning of the orientation binary variables.  Since each carton dimension is parallel to one and only one axis, there are six linear equality constraints on the orientation binary variables. The 6 constraints are linearly dependent with rank 5, so that only 5 are needed.
\begin{equation} \label{eq_orient}
\begin{split}
l_{Xi}+l_{Yi}+l_{Zi}&=1 \quad \forall i \in \llbracket n \rrbracket, \\
w_{Xi}+w_{Yi}+w_{Zi}&=1  \quad \forall i \in \llbracket n \rrbracket, \\
h_{Xi}+h_{Yi}+h_{Zi}&=1  \quad \forall i \in \llbracket n \rrbracket, \\
l_{Xi}+w_{Xi}+h_{Xi}&=1  \quad \forall i \in \llbracket n \rrbracket, \\
l_{Yi}+w_{Yi}+h_{Yi}&=1  \quad \forall i \in \llbracket n \rrbracket, \\
l_{Zi}+w_{Zi}+h_{Zi}&=1  \quad \forall i \in \llbracket n \rrbracket.
\end{split}
\end{equation}

\begin{figure}[h]
	\centering
	\includegraphics[scale=.5]{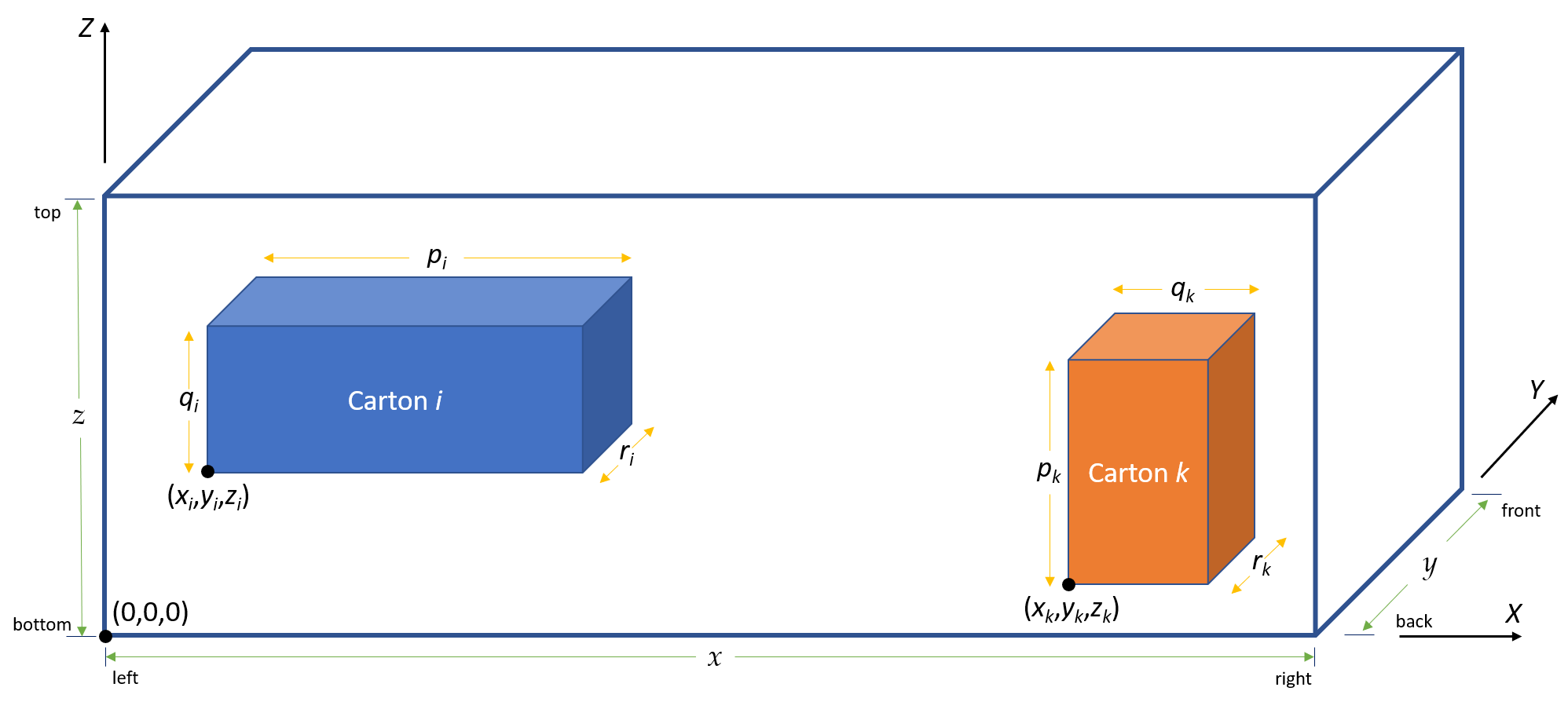}
	\caption{Orientation binary variables for cartons $i$ and $k$: $l_{Xi}=w_{Zi}=h_{Yi}=l_{Zk}=w_{Xk}=h_{Yk}=1$.}
	\label{fig_orientations}
\end{figure}

\paragraph{Containment Constraints} 
For each carton $i \in \llbracket n \rrbracket$, $\left(x_i,y_i,z_i \right) \in \mathbb{R}_{\ge 0}^3$ denotes the nonnegative coordinates of the lbb corner of carton $i$. Each carton $i \in \llbracket n \rrbracket$ must be contained in the box, which requires the following 6 linear inequality constraints.
\begin{equation} \label{eq_contain}
\begin{split}
x_ i &\ge 0  \quad \forall i \in \llbracket n \rrbracket, \\
y_i  &\ge 0 \quad \forall i \in \llbracket n \rrbracket, \\
z_i  &\ge 0 \quad \forall i \in \llbracket n \rrbracket, \\
x_i+p_i l_{Xi}+q_i w_{Xi}+r_i h_{Xi}&\le x  \quad \forall i \in \llbracket n \rrbracket, \\
y_i+p_i l_{Yi}+q_i w_{Yi}+r_i h_{Yi}&\le y  \quad \forall i \in \llbracket n \rrbracket, \\
z_i+p_i l_{Zi}+q_i w_{Zi}+r_i h_{Zi}&\le z  \quad \forall i \in \llbracket n \rrbracket.
\end{split}
\end{equation}

\paragraph{Nonoverlapping Constraints} 
Every distinct pair of cartons $i,k \in \llbracket n \rrbracket$, with $i<k$, cannot overlap. To enforce these constraints, 6 nonoverlapping binary variables indicate the relative position of pairs of cartons. $a_{ik}=1$ implies that carton $i$ is left of carton $k$, $b_{ik}=1$ implies that carton $i$ is right of carton $k$, $c_{ik}=1$ implies that carton $i$ is behind carton $k$, $d_{ik}=1$ implies that carton $i$ is in front of carton $k$, $e_{ik}=1$ implies that carton $i$ is below carton $k$, and $f_{ik}=1$ implies that carton $i$ is on top of carton $k$. The reader is referred to Figure~\ref{fig_nonoverlapping} for a 3D illustration of the meaning of the nonoverlapping binary variables. In order for cartons $i$ and $k$ to be nonoverlapping, at least one of $a_{ik}$, $b_{ik}$, $c_{ik}$, $d_{ik}$, $e_{ik}$, and $f_{ik}$ must be 1. The following 7 linear inequality constraints enforce nonoverlapping of every distinct pair of cartons $i$ and $k$.
\begin{equation} \label{eq_nonover}
\begin{split}
x_i+p_i l_{Xi}+q_i w_{Xi}+r_i h_{Xi}&\le x_k + \left(1-a_{ik} \right)x \quad \forall i,k \in \llbracket n \rrbracket, \, i<k, \\
x_k+p_k l_{Xk}+q_k w_{Xk}+r_k h_{Xk}&\le x_i + \left(1-b_{ik} \right)x \quad \forall i,k \in \llbracket n \rrbracket, \, i<k, \\
y_i+p_i l_{Yi}+q_i w_{Yi}+r_i h_{Yi}&\le y_k + \left(1-c_{ik} \right)y \quad \forall i,k \in \llbracket n \rrbracket, \, i<k, \\
y_k+p_k l_{Yk}+q_k w_{Yk}+r_k h_{Yk}&\le y_i + \left(1-d_{ik} \right)y \quad \forall i,k \in \llbracket n \rrbracket, \, i<k, \\
z_i+p_i l_{Zi}+q_i w_{Zi}+r_i h_{Zi}&\le z_k +   \left(1-e_{ik} \right) z \quad \forall i,k \in \llbracket n \rrbracket, \, i<k, \\
z_k+p_k l_{Zk}+q_k w_{Zk}+r_k h_{Zk}&\le z_i +   \left(1-f_{ik} \right) z \quad \forall i,k \in \llbracket n \rrbracket, \, i<k, \\
a_{ik}+b_{ik}+c_{ik}+d_{ik}+e_{ik}+f_{ik} &\ge 1 \quad \forall i,k \in \llbracket n \rrbracket, \, i<k.
\end{split}
\end{equation}

\begin{figure}[h]
	\centering
	\includegraphics[scale=.5]{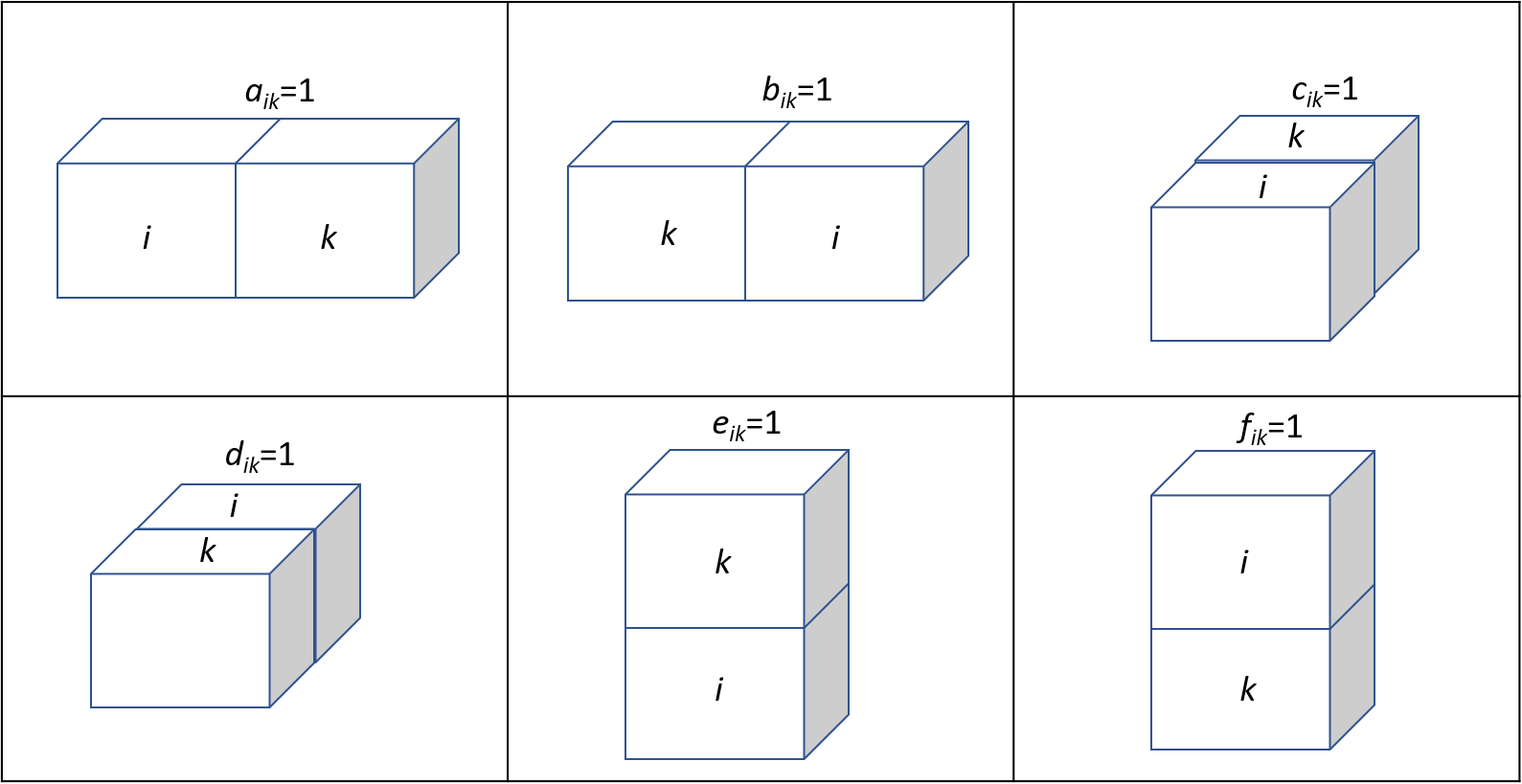}
	\caption{Illustration of the nonoverlapping binary variables.}
	\label{fig_nonoverlapping}
\end{figure}

\paragraph{Symmetry-Breaking Constraints: Identical Cartons}
Suppose that some of the $n$ cartons are identical, in the sense that they share the same sorted dimensions, and that each subset of identical cartons has been assigned consecutive indices in $\llbracket n \rrbracket$. Let $V^* \subset  \llbracket n \rrbracket$ denote the subset of carton indices in the union of all subsets of identical cartons (i.e., cartons with identical sorted dimensions). Let $V \subset V^* $ denote the subset of carton indices such that $m \in V$ if and only if $ \textproc{sort}\left(p_m,q_m,r_m\right) =  \textproc{sort}\left(p_{m+1},q_{m+1},r_{m+1}\right)$.  For $m \in V$, the $X-$coordinates (or alternatively the $Y-$ or $Z-$coordinates) of the lbb corners of identical cartons can be arranged in nondecreasing order.
\begin{equation} \label{eq_ident}
x_{m}\le x_{m+1} \quad \forall m \in V.
\end{equation}

\paragraph{Symmetry-Breaking Constraints: Carton LBB Corner in First Orthant}
Let $\beta \in \llbracket n \rrbracket$ be the index of a particular carton. For example, $\beta$ might be the index of the smallest volume carton. If $\beta \in V^*$, $\beta$ should be the smallest index of the subset of identical cartons in which $\beta$ lies, in order to be compatible with \eqref{eq_ident}.  Any feasible packing of the $n$ cartons into the box can be rearranged, through a finite sequence of reflections across the box's 3 inner half-planes, to realize a feasible packing such that the lbb corner of the carton with index $\beta$  is located in the box's first orthant $\left\{ \left(u,v,w\right) \in \mathbb{R}_{\ge 0}^3 \colon 0 \le u \le \frac{x}{2}, 0 \le v \le \frac{y}{2}, 0 \le w \le \frac{z}{2} \right\}$. 
\begin{equation} \label{eq_lbb}
\begin{split}
x_{\beta}&\le \frac{x}{2}, \\
y_{\beta}&\le \frac{y}{2}, \\
z_{\beta}&\le \frac{z}{2}.
\end{split}
\end{equation}

\paragraph{Comments on the Constraints}
The orientation \eqref{eq_orient}, containment \eqref{eq_contain}, and nonoverlapping \eqref{eq_nonover} constraints  are given in \cite{chen1995analytical,tsai2006global,tsai2015global}. References \cite{tsai2006global,tsai2015global,lin2017superior,hu2017solving,huang2018global,truong2019product} provide alternative formulations of these constraints, however the author found that a MILP solver is able to determine feasibility faster using the constraints \eqref{eq_orient}, \eqref{eq_contain}, and \eqref{eq_nonover} compared to the other constraint formulations. The symmetry-breaking constraints \eqref{eq_ident} and \eqref{eq_lbb} are new and have not appeared in the literature before and should benefit formulations of related packing problems such as the knapsack container loading problem (KCLP) \cite{pisinger2002heuristics,moura2005grasp,parreno2008maximal,parreno2010neighborhood,fanslau2010tree,gonccalves2012parallel,ramos2016physical,ramos2016container,brinker2016optimization}, the three-dimensional bin packing problem (3D-BPP) \cite{martello2000three}, and the three-dimensional open-dimension rectangular packing problem (3D-ODRPP) \cite{huang2018global}. The symmetry-breaking constraints tend to help the MILP solver determine feasibility faster by reducing the number of feasible solutions and thereby reducing the size of the search tree. Altogether, the constraints \eqref{eq_orient}-\eqref{eq_lbb} comprise the fitting MILP. The fitting MILP consists of $3n$ nonnegative continuous variables, $3n(n-1)+9n$ binary variables, $5n$ linear equality constraints, and $\frac{7}{2}n(n-1)+6n+\left| V \right|+3$ linear inequality constraints. 

\paragraph{Special Packing Constraints}
Some cartons must be packed in special ways, in which case the special packing constraints must be enforced without conflicting with the symmetry-breaking constraints \eqref{eq_ident}-\eqref{eq_lbb}. For example, some cartons cannot be stacked on top of other cartons, so that the $Z-$coordinates of their lbb corners must equal 0, in which case the constraint $z_i=0$ must be added to the fitting MILP for each such carton $i$ and the third constraint $z_{\beta} \le \frac{z}{2}$ in \eqref{eq_lbb} must be removed since a feasible packing cannot be reflected across the vertical half-plane. In ``Modifications to Handle Height-Oriented and Bottom-Resting Cartons" in Section~\ref{sec_algorithm}, such cartons were called bottom-resting (BR).

As another example, some cartons must be packed vertically, so that their height dimensions must be parallel to the box's $Z-$axis, in which case the constraint $h_{Zi}=1$ must be added to the fitting MILP for each such carton $i$. In ``Modifications to Handle Height-Oriented and Bottom-Resting Cartons" in Section~\ref{sec_algorithm}, such cartons were called height-oriented (HO). If there is at least one carton in the shipment that must be height-oriented, then the definition of identical cartons given earlier  must be revised in order to construct the subset $V$ for the symmetry-breaking constraints \eqref{eq_ident}. A pair of cartons $i,j \in \llbracket n \rrbracket$, with $i \ne j$, is identical if and only if either of the following conditions is satisfied:
\begin{enumerate}[(i)]
\item they are both not height-oriented and $ \textproc{sort}\left(p_i,q_i,r_i\right) =  \textproc{sort}\left(p_{j},q_{j},r_{j}\right)$
\item they are both height-oriented, $ \textproc{sort}\left(p_i,q_i\right) =  \textproc{sort}\left(p_{j},q_{j}\right)$, and $r_i = r_j$.
\end{enumerate}
With this new definition of identical cartons, it is still assumed that each subset of identical cartons has been assigned consecutive indices in $\llbracket n \rrbracket$ and $V^* \subset  \llbracket n \rrbracket$ denotes the subset of carton indices in the union of all subsets of identical cartons. In addition, $V \subset V^*$ denotes the subset of carton indices such that $m \in V$ if and only if cartons $m$ and $m+1$ are identical in the new sense.

As a third example, stability may be required for cartons which do not rest on the box's bottom, which requires additional constraints \cite{truong2019product} and elimination of the third constraint $z_{\beta} \le \frac{z}{2}$ in \eqref{eq_lbb} since a feasible stable packing cannot necessarily be reflected across the vertical half-plane to generate another feasible stable packing (reflecting a stable packing across the vertical half-plane may result in an unstable packing).

\paragraph{Solving the Fitting MILP}
A third-party MILP solver must be used to solve the fitting MILP. There are many MILP solvers available, but only Gurobi \cite{Gurobi_online}, CPLEX \cite{CPLEX_online}, and CBC \cite{CBC_online} are mentioned here. Gurobi and CPLEX are regarded as the best available MILP solvers, though they are commercial and require an expensive license for non-academic use. As of this writing, there is a free edition of CPLEX that solves MILPs having less than 1000 variables and 1000 constraints, which means that it can be used to solve the fitting MILP for shipments with less than 16 cartons (though the author's tests showed that it worked for shipments with less than 18 cartons). CBC is regarded as the best available free MILP solver, though its performance is quite inferior to any of the commercial MILP solvers.

\paragraph{Benchmarking I} 15,000 synthetic shipments, consisting of 1 to 8 cartons (i.e., no foldable items), were fit into 5,284 candidate boxes using CPLEX v12.10.0 and Gurobi v9.0.0 with and without the symmetry-breaking constraints \eqref{eq_ident}-\eqref{eq_lbb}. Of the 15,000 synthetic shipments, 20.88\% (3,132 shipments) consist of 4 or more cartons, for which the fitting MILP was solved by one of the MILP solvers since brute force fitting algorithms were used for shipments consisting of 1, 2, or 3 cartons. 173,066 fitting MILPs had to be solved for each of the eight MILP solver and constraint combinations. MILPs taking more than 5 seconds (s) to solve were terminated early. Table~\ref{tab_benchmark1} shows the run times (measured in seconds) and the number of MILPs terminated early due to the 5[s] time limit (TL) for each of the eight combinations. Table~\ref{tab_benchmark1} also shows the speedup and the percentage change in the number of MILPs terminated early due to the 5[s] time limit realized by using CPLEX v12.10.0 instead of Gurobi v9.0.0 and by using the symmetry-breaking constraints \eqref{eq_ident}-\eqref{eq_lbb}. Table~\ref{tab_benchmark1} shows that CPLEX v12.10.0 is faster than Gurobi v9.0.0, but does not necessarily terminate fewer MILPs early due to the 5[s] time limit. Table~\ref{tab_benchmark1} also shows that the symmetry-breaking constraints \eqref{eq_ident}-\eqref{eq_lbb} yield faster run times. The symmetry-breaking constraints \eqref{eq_ident}-\eqref{eq_lbb} terminate fewer MILPs early due to the 5[s] time limit for CPLEX but not for Gurobi. The symmetry-breaking constraint \eqref{eq_lbb} alone does not improve run times much; however, \eqref{eq_lbb} in concert with \eqref{eq_ident} gives a significant improvement over \eqref{eq_ident} alone. The benchmarking results in Table~\ref{tab_benchmark1}  were obtained using the same set of shipments, set of candidate boxes, software (Julia v0.6.4 and JuMP v0.18.5), operating system (Ubuntu 18.04.4 LTS (Bionic Beaver)), and hardware (an Intel Core i7-3930K CPU @ 3.20 GHz with 6 physical and 12 logical cores and 32GB RAM) described earlier in ``Numerical Experiments" in Section~\ref{sec_algorithm}. As described there, the set of shipments and the set of candidate boxes are publicly available on Mendeley Data \cite{https://doi.org/10.17632/f2bnnnm5zc.3}.

\begin{table}[h!]
	\resizebox{\columnwidth}{!}{
		\begin{tabular}{ |c||c|c||c| } 
			\hline
			& CPLEX v12.10.0 & Gurobi v9.0.0 & \shortstack{CPLEX Speedup /\\ \%$\Delta$ \# MILP TL} \\
			\hline \hline
			\eqref{eq_orient}-\eqref{eq_nonover} & 7490.2[s] / 129 & 8463.8[s] / 42 & 1.13 / 207.1\% \\ \hline 
			\eqref{eq_orient}-\eqref{eq_nonover} \& \eqref{eq_ident} & 5806.3[s] / 105 & 7964.0[s] / 129 & 1.37 / -18.6\% \\ \hline
			\eqref{eq_orient}-\eqref{eq_nonover} \& \eqref{eq_lbb} & 7187.2[s] / 116 & 8289.8[s] / 50 & 1.15 / 132.0\% \\ \hline
			\eqref{eq_orient}-\eqref{eq_nonover} \& \eqref{eq_ident}-\eqref{eq_lbb} & 5047.4[s] / 55 & 6944.0[s] / 76 & 1.38 / -27.6\% \\ \hline \hline
			\eqref{eq_ident} Speedup / \%$\Delta$ \# MILP TL & 1.29 / -18.6\% & 1.06 / 207.1\% & \\ \hline
			\eqref{eq_lbb} Speedup / \%$\Delta$ \# MILP TL & 1.04 / -10.1\% & 1.02 / 19.0\% & \\ \hline
			\eqref{eq_ident}-\eqref{eq_lbb} Speedup / \%$\Delta$ \# MILP TL & 1.48 / -57.4\% & 1.22 / 81.0\% & \\ \hline
	\end{tabular} }
	\caption{Comparison of CPLEX v12.10.0 and Gurobi v9.0.0 with and without the symmetry-breaking constraints \eqref{eq_ident}-\eqref{eq_lbb}. Both run times and number of MILPs terminated early due to the 5[s] time limit are compared. CPLEX v12.10.0 is faster than Gurobi v9.0.0, but does not necessarily terminate fewer MILPs early due to the 5[s] time limit. The symmetry-breaking constraints \eqref{eq_ident}-\eqref{eq_lbb} yield faster run times. The symmetry-breaking constraints \eqref{eq_ident}-\eqref{eq_lbb} terminate fewer MILPs early due to the 5[s] time limit for CPLEX but not for Gurobi.}
	\label{tab_benchmark1}
\end{table}

\paragraph{Benchmarking II} 20,425 historical customer shipments, consisting of less than 13 cartons, were fit into 4,922 candidate boxes using CPLEX v12.10.0 and Gurobi v9.0.0 with and without the symmetry-breaking constraints \eqref{eq_ident}-\eqref{eq_lbb}. Of the 20,425 historical customer shipments, 26\% (5,330 shipments) consist of 4 or more cartons, for which the fitting MILP was solved by one of the MILP solvers since brute force fitting algorithms were used for shipments consisting of 0, 1, 2, or 3 cartons. Slightly under 676,000 fitting MILPs had to be solved for each of the eight MILP solver and constraint combinations. MILPs taking more than 5 seconds (s) to solve were terminated early. Table~\ref{tab_benchmark2} shows the run times (measured in seconds) and the number of MILPs terminated early due to the 5[s] time limit (TL) for each of the eight combinations. Table~\ref{tab_benchmark2} also shows the speedup and the percentage reduction in the number of MILPs terminated early due to the 5[s] time limit afforded by using CPLEX v12.10.0 instead of Gurobi v9.0.0 and by using the symmetry-breaking constraints \eqref{eq_ident}-\eqref{eq_lbb}. Table~\ref{tab_benchmark2} shows that CPLEX v12.10.0 is faster than Gurobi v9.0.0, resulting in fewer MILPs terminated early due to the 5[s] time limit. Table~\ref{tab_benchmark2} also shows that the symmetry-breaking constraints \eqref{eq_ident}-\eqref{eq_lbb} yield faster run times, resulting in fewer MILPs terminated early due to the 5[s] time limit. The symmetry-breaking constraint \eqref{eq_lbb} alone does not improve run times much; however, \eqref{eq_lbb} in concert with \eqref{eq_ident} gives a bit of improvement over \eqref{eq_ident} alone. The benchmarking results in Table~\ref{tab_benchmark2}  were obtained using the same software (Julia v0.6.4 and JuMP v0.18.5), operating system (Ubuntu 18.04.4 LTS (Bionic Beaver)), and hardware (an Intel Core i7-3930K CPU @ 3.20 GHz with 6 physical and 12 logical cores and 32GB RAM) described earlier in ``Numerical Experiments" in Section~\ref{sec_algorithm}. Unfortunately, the set of historical customer shipments and the set of candidate boxes used to generate Table~\ref{tab_benchmark2} are confidential to Target Corporation and cannot be released publicly.

\begin{table}[h!]
	\resizebox{\columnwidth}{!}{
		\begin{tabular}{ |c||c|c||c| } 
			\hline
			& CPLEX v12.10.0 & Gurobi v9.0.0 & \shortstack{CPLEX Speedup /\\ \%$\Delta$ \# MILP TL} \\
			\hline \hline
			\eqref{eq_orient}-\eqref{eq_nonover} & 259252.0[s] / 31110 & 334261.9[s] / 40837 & 1.29 / -23.8\% \\ \hline 
			\eqref{eq_orient}-\eqref{eq_nonover} \& \eqref{eq_ident} & 231698.4[s] / 27606 & 283056.1[s] / 32600 & 1.22 / -15.3\% \\ \hline
			\eqref{eq_orient}-\eqref{eq_nonover} \& \eqref{eq_lbb} & 256932.6[s] / 30658 & 336187.4[s] / 40615 & 1.31 / -24.5\% \\ \hline
			\eqref{eq_orient}-\eqref{eq_nonover} \& \eqref{eq_ident}-\eqref{eq_lbb} & 220688.7[s] / 25832 & 272121.7[s] / 30902 & 1.23 / -16.4\% \\ \hline \hline
			\eqref{eq_ident} Speedup / \%$\Delta$ \# MILP TL & 1.12 / -11.3\% & 1.18 / -20.2\% & \\ \hline
			\eqref{eq_lbb} Speedup / \%$\Delta$ \# MILP TL & 1.01 / -1.5\% & 0.99 / -0.5\% & \\ \hline
			\eqref{eq_ident}-\eqref{eq_lbb} Speedup / \%$\Delta$ \# MILP TL & 1.17 / -17.0\% & 1.23 / -24.3\% & \\ \hline
		\end{tabular} }
		\caption{Comparison of CPLEX v12.10.0 and Gurobi v9.0.0 with and without the symmetry-breaking constraints \eqref{eq_ident}-\eqref{eq_lbb}. Both run times and number of MILPs terminated early due to the 5[s] time limit are compared. CPLEX v12.10.0 is faster than Gurobi v9.0.0, resulting in fewer MILPs terminated early due to the 5[s] time limit. The symmetry-breaking constraints \eqref{eq_ident}-\eqref{eq_lbb} yield faster run times, resulting in fewer MILPs terminated early due to the 5[s] time limit.}
		\label{tab_benchmark2}
\end{table}

\paragraph{An Additional Application} Aside from being used for box suite recommendation, an online retailer can also use the fitting MILP to recommend a box from its box suite for shipping a customer's order. Given a suite of boxes and a customer's order, the fitting MILP can be used to determine into which boxes the order fits, from which the box having minimum total shipping (shipping plus material) cost can be selected.

\section{Summary \& Future Work}
This paper offers an algorithm that recommends an optimal suite of shipping boxes subject to being able to 1) lock specific boxes in the suite and 2) pack certain items that must be height-oriented and/or bottom-resting. The algorithm assumes that shipped items are either foldable (in which case they are modeled to be liquid) or rigid (in which case they are modeled as 3D rectangular cartons). If not height-oriented, the algorithm assumes that a 3D rectangular carton must be oriented in 1 of 6 possible ways when packed into a shipping box, so that its edges are parallel to those of the box. The fitting problem is formulated and solved using MILP. New symmetry-breaking constraints are introduced that lower the computation time required to solve each fitting MILP. By solving the fitting MILPs, the algorithm determines the costs of shipping a set of shipments into a set of candidate boxes. Then, given the cost matrix, the algorithm solves a $p$-median problem  to select the minimum cost subset of $p$ boxes.

An avenue for further investigation is to model additional physically accurate constraints, such as cargo stability, in the fitting problem \cite{bortfeldt2013constraints,paquay2016mixed,junqueira2012three,junqueira2012mip,junqueira2012optimization,junqueira2013optimization,junqueira2017solving,truong2019product}. In future work, instead of using the computationally expensive MILP, it may be possible to solve the fitting problem more rapidly by using metaheuristics that solve the KCLP \cite{pisinger2002heuristics,moura2005grasp,parreno2008maximal,parreno2010neighborhood,fanslau2010tree,gonccalves2012parallel,ramos2016physical,ramos2016container,brinker2016optimization} or by using supervised machine learning (e.g., by training a neural network on a set of shipments and a fine set of candidate boxes, using the results of the fitting MILP or KCLP metaheuristics as truth). Another approach to reduce computation times is to use stratified random sampling \cite{stroehmer2012repac,levy2013sampling} to obtain a smaller sample of historical shipments, which would reduce the number of fitting problems and the size of the $p$-median problem that must be solved.

Finally, since it is logistically challenging and expensive for an online retailer to change its box suite, the sensitivity of the optimal box suite to changes in the price of corrugate, carrier rate tables, and online customer demand should be investigated and understood. 

\section*{Acknowledgements}
This research was funded by Target Corporation and by the Institute for Mathematics and its Applications at the University of Minnesota, Twin Cities. The author thanks his Target colleagues Kaveh Khodjasteh and Neil Witte for their leadership and organization, Chinmay Jethwa and Sunita Venkatachalam for providing historical shipment data, Brian Ager for his leadership and organization and for providing copious data and input parameters, and Jake Streich for his expertise in packaging engineering. Brian Ager suggested the idea of optimizing the box suite based on a statistically significant, randomly sampled subset of the previous year's customer shipments. 
Francisco Casas B. provided advice on using, improved, and corrected errors in his software dc2.

\printbibliography

\end{document}